\newcommand{\Scal}{\mathcal{S}} 
\newcommand{\Mat}{{\rm Mat}}
\newcommand{\R}{\mathbb{R}}
\newcommand{\F}{\mathcal{F}}
\newcommand{\blV}{\bullet}
\newcommand{\whV}{\circ}
\newcommand{\plus}{\oplus}
\newcommand{\bbEdge}{\bullet\!\!-\!\!\bullet}
\newcommand{\wwEdge}{\circ\!\!-\!\!\circ}
\newcommand{\wbEdge}{\circ\!\!-\!\!\bullet}
\newcommand{\SD}{{\rm SD}}
\newcommand{\rank}{{\rm rank}}
\newcommand{\Null}{{\rm Ker}}
\newcommand{\Red}{\Rscr_\ast}
\newcommand{\Ascr}{\mathscr{A}}
\newcommand{\Vscr}{\mathscr{V}}
\newcommand{\Rscr}{\mathscr{R}}
\newcommand{\Fscr}{\mathscr{F}}
\newtheorem{teor}{Theorem}[section]
\newtheorem{rmk}[teor]{Remark}
\newtheorem{lema}[teor]{Lemma}
\newtheorem{prop}[teor]{Proposition}
\newtheorem{coro}[teor]{Corollary}
\newtheorem{defi}[teor]{Definition}
\begin{document}

\title[Rank of Stably Dissipative Graphs]{Rank of Stably Dissipative Graphs}

\author[P.~Duarte]{Pedro Duarte}
\address{Departamento de Matemática and Cmaf \\
Faculdade de Ciências\\
Universidade de Lisboa\\
Campo Grande, Edifício C6, Piso 2\\
1749-016 Lisboa, Portugal 
}
\email{pduarte@ptmat.fc.ul.pt}

\author[T.~Peixe]{Telmo Peixe}
\address{Departamento de Matemática and Cmaf \\
Faculdade de Ciências\\
Universidade de Lisboa\\
Campo Grande, Edifício C6, Piso 2\\
1749-016 Lisboa, Portugal 
}
\email{tjpeixe@fc.ul.pt}

\date{\today}

\begin{abstract}
For the class of stably dissipative Lotka-Volterra systems we prove that the rank
of its defining matrix, which is the dimension of the associated invariant foliation,
is completely determined by the system's graph.
\end{abstract}

\maketitle

\section{Introduction}

In his work ``Leçons sur la Théorie Mathématique de la Lutte pour la Vie'' \cite{Vo1931} Volterra
began the study of differential equations

\begin{equation}\label{eq1LV}
\dot{x}_i(t)=x_i(t)\Bigg(r_i+\sum_{j=1}^n a_{ij}x_j(t)\Bigg), \quad i=1,\dots,n,
\end{equation}
where $x_i(t)\ge 0$ represents the density of population $i$ in time $t$ and $r_i$ its intrinsic rate of growth or decay. Each coefficient $a_{ij}$ represents the effect of population $j$ on population $i$. If $a_{ij}>0$ this means that  population $i$ benefits from population $j$. $A=(a_{ij})$ is said to be the interaction matrix of the system (\ref{eq1LV}).
This system of differential equations is usually referred to as the Lotka-Volterra equations.

For two-dimensional 
systems we can completely analyze the dynamics of~(\ref{eq1LV}) (see, for example, \cite{HS1998}). 
However we are far from being capable of doing the same for higher dimensional Lotka-Volterra systems, 
in spite of some important results \cite{RW1984,Ta1996,DFO1998,ZL2010}.

There is a close relation between studying the dynamical properties of a Lotka-Volterra system and the algebraic properties of its interaction matrix, and depending on that these systems can be classified in tree main classes: \textit{cooperative} or \textit{competitive}; \textit{conservative}; and \textit{dissipative}.

For the {\em cooperative} and {\em competitive} systems , overall results were obtained by Smale \cite{Sm1976} and Hirsch \cite{Hi1982, Hi1988}, among others, for example, Zeeman \cite{Ze1993, Ze1995, Ze1996}, Van Den Driessche \textit{et al.} \cite {DZ1998}, Hofbauer \textit{et al.} \cite{HS1994}, Smith \cite{Sm1986} and Karakostas \textit{et al.} \cite {KG1988}.

Concerning the {\em conservative} systems, the initial investigations are attributed to Volterra,
who also defined the class of {\em dissipative}
 systems \cite{Vo1931} looking for a generalization of the predator-prey system.
Given  $A\in\Mat_{n\times n}(\R)$ and a point $q\in\R^n_+$
we can write  the  Lotka-Volterra system (\ref{eq1LV}) as
\begin{equation}\label{LV}
 \frac{dx}{dt}=X_{A,q}(x)\;,
\end{equation}
where $X_{A,q}(x)=x \ast A\,(x-q)$. The symbol `$\ast$' denotes point-wise multiplication of vectors in $\R^n$.
We say that system~(\ref{LV}), the matrix $A$, or the vector field $X_{A,q}$,  are {\em dissipative} \, iff\, there is a positive diagonal matrix $D$ such that
$Q(x)=x^T A D x\leq 0$ for every $x\in \R^n$. 
Notice this condition is equivalent to $x^T D^{-1} A x\le 0$, because
$$x^T D^{-1} A x = (D^{-1} x)^T A D (D^{-1} x) = Q(D^{-1} x) \le 0 \;.$$
When $A$ is dissipative, ~(\ref{LV}) admits the Lyapunov function
\begin{equation}\label{Liapunov_function}
h(x)=\sum_{i=1}^n \frac{ x_i-q_i\,\log x_i}{d_i} \;,
\end{equation}
which decreases along orbits of $X_{A,q}$.
In fact the derivative of $h$ along orbits of $X_{A,q}$ is 
\begin{equation}\label{Liapunov_function_derivative}
\dot{h}=\sum_{i,j=1}^n \frac{a_{ij}}{d_i}(x_i-q_i)(x_j-q_j)
=(x-q)^T D^{-1} A (x-q) \le 0.
\end{equation}
Since $h$ function is proper, $X_{A,q}$ determines a complete semi-flow 
$\phi_{A,q}^t:\R^n_+\hookleftarrow$, defined for all $t\geq 0$.
Let  $\Gamma_{A,q}$ denote the forward limit set of~(\ref{LV}),
i.e., the set of all accumulation points of  $\phi_{A,q}^t(x)$ as $t\to+\infty$,
sometimes referred to as the system's attractor.
By La Salle's theorem \cite{La1968}  we know that for dissipative systems
 $\Gamma_{A,q}\subseteq \{\,\dot{h}=0\,\}$.

The notion of \textit{stably dissipative} is due to Redheffer \textit{et al.}, who in a series of papers in the $80$'s \cite{RZ1981,RZ1982,RW1984,Re1985,Re1989} studied the asymptotic stability of this class of systems, using the term \textit{stably admissible} systems. Redheffer \textit{et al.} designated by \textit{admissible} the matrices that Volterra had initially classified as \textit{dissipative} \cite{Vo1931}.
We now give the precise definition of stably dissipative system.
Given a matrix $A\in\Mat_{n\times n}(\R)$ we say that another real matrix $\widetilde{A}\in\Mat_{n\times n}(\R)$ is a {\em perturbation of $A$} iff
\begin{displaymath}
\tilde{a}_{ij}=0 \, \Leftrightarrow \, a_{ij}=0.
\end{displaymath}
We say that a given matrix $A$, $X_{A,q}$, or~(\ref{LV}), is {\em stably dissipative} 
 iff any sufficiently small perturbation $\widetilde{A}$ of $A$ is dissipative, i.e.,
\begin{displaymath}
\exists \, \epsilon>0 \, : \, \max_{i,j}|a_{ij}-\tilde{a}_{ij}|<\epsilon \,\, \Rightarrow \,\, \widetilde{A} \textrm{ is dissipative}.
\end{displaymath}

\bigskip

From the interaction matrix $A$ we can construct a graph $G_A$ 
having as vertices the $n$ species $\{1,\ldots, n\}$. See definition \ref{graph} below. 
An edge is drawn connecting the vertices $i$ and $j$ whenever $a_{ij}\neq 0$ or $a_{ji}\neq 0$.
 Redheffer \textit{et al.} \cite{RZ1981,RZ1982,RW1984,Re1985,Re1989} have characterized the class
of stably dissipative systems and its attractor $\Gamma_{A,q}$ in terms of the graph $G_A$.
In particular, they describe a simple reduction algorithm, running on the graph $G_A$,
that `deduces' every restriction of the form $\Gamma_{A,q}\subseteq \{ x\,:\, x_i=q_i\}$,
  $1\leq i\leq  n$, that holds for every stably dissipative system with interaction graph $G_A$.
To start this algorithm  they use the following

\begin{lema}\label{lema2.1.ZL2010}
Given a stably dissipative matrix $A$, there is some positive
diagonal matrix $D$ such that
$$ \sum_{i,j=1}^n \frac{a_{ij}}{d_i} \,w_i\,w_j = 0 \quad \Rightarrow\quad a_{ii}\,w_i=0,\; \forall \, i=1,\ldots, n\;.$$
\end{lema}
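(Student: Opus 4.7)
My plan splits into an algebraic reduction and an existence step using stable dissipativity.

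For the reduction, note first that $\sum_{i,j} \frac{a_{ij}}{d_i} w_i w_j = w^T M_D w$ where $M_D := \frac{1}{2}(D^{-1}A + A^T D^{-1})$. Since $A$ is dissipative, the convex cone $\Dscr(A) := \{D > 0 \text{ diagonal} : M_D \preceq 0\}$ is nonempty, and for $D \in \Dscr(A)$ the vanishing $w^T M_D w = 0$ is equivalent to $w \in \Null(M_D)$. Writing $I_- := \{i : a_{ii} \ne 0\}$ and $I_0 := \{1,\dots,n\}\setminus I_-$, the lemma becomes: find $D \in \Dscr(A)$ such that $\Null(M_D) \subseteq \R^{I_0}$. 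I would then invoke the standard fact that a positive semidefinite matrix $N$ with $N_{ii}=0$ has its entire $i$-th row and column zero (from $N_{ii}N_{jj} - N_{ij}^2 \ge 0$ collapsing to $-N_{ij}^2 \ge 0$). Applied to $-M_D$ at $i \in I_0$, for which $-a_{ii}/d_i = 0$, this forces the block form $M_D = \mathrm{diag}(M_-(D), 0)$, where $M_-(D)$ is the principal submatrix indexed by $I_-$. Hence $\Null(M_D) = \Null(M_-(D)) \oplus \R^{I_0}$, and the lemma reduces to producing some $D \in \Dscr(A)$ with $M_-(D)$ strictly negative definite.

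For the existence step, I would choose $D^* \in \Dscr(A)$ maximizing $\rank(M_-(D))$ and argue that this maximum equals $|I_-|$. Suppose for contradiction that $\rank(M_-(D^*)) < |I_-|$, and pick a nonzero $u \in \Null(M_-(D^*))$, extended by zeros to $\R^n$, with $u_i \ne 0$ for some $i \in I_-$. Stable dissipativity supplies, for every small $\epsilon > 0$, a positive diagonal $\tilde D_\epsilon$ such that $M_{\tilde D_\epsilon}(A + \epsilon E_{ii}) \preceq 0$ (the perturbation has the same support as $A$ since $a_{ii} \ne 0$). Because
$$M_{\tilde D_\epsilon}(A) \;=\; M_{\tilde D_\epsilon}(A + \epsilon E_{ii}) \;-\; (\epsilon / \tilde d_{\epsilon,i})\, E_{ii}$$
is the difference of a negative semidefinite and a positive semidefinite matrix, $\tilde D_\epsilon$ also lies in $\Dscr(A)$, and evaluating on $u$ gives
$$u^T M_{\tilde D_\epsilon}(A)\, u \;\le\; -\,\epsilon\, u_i^2 / \tilde d_{\epsilon,i} \;<\; 0.$$

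The main obstacle lies in converting this strict inequality into a contradiction with the rank-maximality of $D^*$. A naive compactness argument --- normalizing and extracting a subsequential limit $\tilde D_{\epsilon_k} \to D_0 \in \overline{\Dscr(A)}$ --- only yields $u^T M_{D_0}(A)\, u \le 0$, which is perfectly consistent with $D_0 \in \Dscr(A)$. To close the argument one must either (i) combine a first-order sensitivity analysis of the curve $\epsilon \mapsto \tilde D_\epsilon$ with the fact that $\tilde D_\epsilon$ cannot leave the maximum-rank stratum of $\Dscr(A)$ too quickly, showing that the admissible drift is too small to absorb the $O(\epsilon)$ positive contribution on $u$; or (ii) replace the single-entry perturbation $\epsilon E_{ii}$ by a multi-entry perturbation tailored to $u$, engineered so that no drift of $\tilde D_\epsilon$ within $\Dscr(A)$ can compensate its effect on $u^T(\cdot)\,u$. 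This technical closure is, I anticipate, the heart of the proof.
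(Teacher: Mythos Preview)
The paper does not actually prove this lemma; it is stated in the introduction as a known result (the label points to Zhao--Luo~\cite{ZL2010}, and the surrounding discussion attributes the idea to Redheffer \textit{et al.}). So there is no paper proof to compare against, and I can only assess your argument on its own merits.

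Your algebraic reduction is correct and efficient: the block decomposition $M_D=\mathrm{diag}(M_-(D),0)$ via the ``zero diagonal entry kills its row'' fact for semidefinite matrices reduces the lemma precisely to finding $D\in\Dscr(A)$ with $M_-(D)\prec 0$.

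The existence step, however, is genuinely incomplete, as you yourself recognise. The rank-maximisation setup combined with a single-entry perturbation $\epsilon E_{ii}$ produces the inequality $u^T M_{\tilde D_\epsilon}(A)\,u<0$, but since $\tilde D_\epsilon$ is a moving target in $\Dscr(A)$ this says nothing about $\Null(M_-(\tilde D_\epsilon))$ and gives no contradiction with the maximality of $\rank(M_-(D^*))$. Neither of your proposed closures (i) or (ii) is needed, and both would be painful to execute.

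The standard fix is much simpler and bypasses rank-maximisation entirely. Perturb \emph{all} negative diagonal entries at once: set $\tilde a_{ii}=(1-\epsilon)a_{ii}$ for $i\in I_-$ and $\tilde a_{ij}=a_{ij}$ otherwise. For small $\epsilon>0$ this has the same zero pattern as $A$, so stable dissipativity yields some $D>0$ with $M_D(\tilde A)\preceq 0$. Since $A-\tilde A=\epsilon\,\mathrm{diag}(a_{ii})_{i\in I_-}$ is diagonal,
\[
M_D(A)=M_D(\tilde A)+\epsilon\,\mathrm{diag}\!\left(\frac{a_{ii}}{d_i}\right),
\]
and restricting to $I_-$ gives, for every nonzero $v\in\R^{I_-}$,
\[
v^T M_-(D)\,v \;\le\; \epsilon\sum_{i\in I_-}\frac{a_{ii}}{d_i}\,v_i^2 \;<\; 0.
\]
Thus $M_-(D)\prec 0$ for this particular $D$, and the lemma follows. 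The point you were missing is that the perturbation should be chosen uniformly over $I_-$ rather than tailored to a hypothetical kernel vector~$u$; then the very $D$ that witnesses dissipativity of the perturbed matrix does the job directly.
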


Since by La Salle's theorem
$$\Gamma_{A,q}\subseteq \left\{\, x\in\R^n_+\,:\, \sum_{i,j=1}^n \frac{a_{ij}}{d_i}\,(x_i-q_i)\,(x_j-q_j)=0\,\right\}\;,$$
it follows that $\Gamma_{A,q}\subseteq \{ x\,:\, x_i=q_i\}$ for every $i=1,\ldots, n$ such that
$a_{ii}<0$. A vertex $i$ is colored black, $\blV$, to state that  $\Gamma_{A,q}\subseteq \{ x\,:\, x_i=q_i\}$ holds.
Similarly, a cross is drawn at a vertex $i$, $\plus$, to state that 
$\Gamma_{A,q}\subseteq \{ x\,:\, X_{A,q}^i(x)=0 \}$, which means   
$\{x_i={\rm const}\}$ is an  invariant foliation under  $\phi_{A,q}^t:\Gamma_{A,q}\hookleftarrow$.
All other vertices are coloured white, $\whV$.
Before starting their procedure, as $a_{ii}\leq 0$ for all $i$,
 they colour in black every vertex $i\in\{1,\ldots, n\}$ such that
$a_{ii}<0$, and in white all other vertices. This should be interpreted as a collection of statements about the attractor $\Gamma_{A,q}$.
The reduction procedure consists of  the following  rules,
  corresponding to valid inference rules:
\begin{itemize}
	\item[(a)] If  $j$ is a $\bullet$ or $\oplus$-vertex and all of its neighbours are $\bullet$, except for one vertex $l$, then   colour $l$ as $\bullet$;
	\item[(b)] If  $j$ is a $\bullet$ or $\oplus$-vertex and all of its neighbours are $\bullet$ or $\oplus$, except for one vertex $l$, then draw $\oplus$ at the vertex $l$;
	\item[(c)] If  $j$ is a $\circ$-vertex and all of is neighbours are $\bullet$ or $\oplus$, then   draw $\oplus$ at the vertex $j$.
\end{itemize}
 Redheffer \textit{et al.}  define the  {\em reduced graph} of the system,   $\Rscr(A)$, 
as the graph  obtained from $G_A$ by successive applications of the reduction rules  (a), (b) and (c) 
 until they can no longer be applied.
In~ \cite{RW1984} Redheffer and Walter proved the following result, which in a sense states that the previous algorithm on $G_A$
can not be improved.

\begin{teor}\label{Theorem_1_Redheffer_Walter}
Given a  stably dissipative matrix $A$,
\begin{itemize}
	\item[(a)] If $\Rscr(A)$ has only $\blV$-vertices 
	then $A$ is nonsingular, the stationary point $q$ is unique and every solution of (\ref{LV}) converges,  as $t\to\infty$, to  $q$.
	\item[(b)] If $\Rscr(A)$ has only $\blV$ and $\plus$-vertices, but not all $\blV$,
	  then $A$ is singular, the stationary point $q$ is not unique, and every solution of (\ref{LV}) has a limit, as $t\to\infty$, that depends on 
	  the initial condition.
	\item[(c)] If $\Rscr(A)$ has at least one $\whV$-vertex then there exists a stably dissipative matrix $\widetilde{A}$, with $G_{\widetilde{A}}=G_A$, such that the system (\ref{LV}) associated with $\widetilde{A}$ has a nonconstant periodic solution.
\end{itemize}
\end{teor}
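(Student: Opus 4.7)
\demof{Proof proposal}

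The plan is first to establish the \emph{soundness} of the reduction rules, so that each colour on $\Rscr(A)$ records a genuine inclusion for $\Gamma_{A,q}$, and then to analyse the three cases; the main obstacle will be case~(c), which requires a construction rather than merely a careful reading of the algorithm. Soundness is proved by induction on reduction steps. The base case is handled by Lemma~\ref{lema2.1.ZL2010}: since $\Gamma_{A,q}\subseteq\{\dot h=0\}$, applying the lemma with $w=x-q$ forces $a_{ii}\,(x_i-q_i)=0$ on $\Gamma_{A,q}$, so that every initial $\blV$-vertex $i$ (those with $a_{ii}<0$) satisfies $\Gamma_{A,q}\subseteq\{x_i=q_i\}$. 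For the inductive step at a $\blV$ or $\plus$ vertex $j$, the key identity is
$\sum_k a_{jk}(x_k-q_k)=0$ on $\Gamma_{A,q}$: either $x_j\equiv q_j$ there and differentiation along the flow with $x_j>0$ gives it, or this is the defining relation of $\plus$. Rules~(a) and~(b) then isolate the single unmarked term in that sum. For rule~(c), at a $\whV$-vertex $j$ surrounded by $\blV$/$\plus$ neighbours we have $a_{jj}=0$ and each neighbouring term $a_{jk}(x_k-q_k)$ is orbit-wise constant on $\Gamma_{A,q}$; hence $\dot x_j/x_j$ is an orbit-wise constant $C$, so $x_j(t)=x_j(0)\,e^{Ct}$, and compactness of $\Gamma_{A,q}$ (from properness of $h$) together with flow-invariance of $\Gamma_{A,q}$ forces $C=0$, i.e.\ $X_{A,q}^j=0$.

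Case~(a) then falls out immediately: $\blV$ at every vertex gives $\Gamma_{A,q}\subseteq\{q\}$, and properness of $h$ makes every forward orbit bounded and accumulating only at $q$, hence convergent to $q$. Nonsingularity of $A$ follows by contradiction: a nonzero $v\in\Null A$ would produce a segment $\{q+t\,v:|t|<\varepsilon\}\subset\R^n_+$ of equilibria, each of them a point of $\Gamma_{A,q}$, contradicting $\Gamma_{A,q}=\{q\}$. Case~(b) runs in parallel: on $\Gamma_{A,q}$ the $\blV$ constraints fix $x_i=q_i$ and the $\plus$ constraints give $X_{A,q}^i=0$, so every point of $\Gamma_{A,q}$ is an equilibrium; the $\plus$-coordinates parameterise a positive-dimensional continuum of equilibria, forcing $A$ to be singular, and each orbit converges to the specific equilibrium whose $\plus$-coordinates equal the asymptotic values along that orbit, which depend on the initial condition.

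Case~(c), the converse, is the hardest step. The strategy is to treat the $\whV$-vertices of $\Rscr(A)$ as a \emph{conservative core} and perturb $A$ inside its zero-pattern so that on the invariant leaf cut out by the $\blV$ and $\plus$ constraints the restricted interaction matrix becomes conservative, i.e.\ $\widetilde A\widetilde D$ skew-symmetric on the $\whV$-block for some positive diagonal $\widetilde D$, rather than merely dissipative. On such a conservative block Volterra's classical first integrals foliate the dynamics by compact level sets carrying non-constant periodic solutions, which lift to the full system through the $\blV$/$\plus$ constraints. The delicate point I anticipate is the simultaneous compatibility of three requirements: the perturbed $\widetilde A$ must keep $G_{\widetilde A}=G_A$, it must restrict to a conservative matrix on the $\whV$-block, and it must remain stably dissipative on the ambient space. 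Verifying that the off-diagonal freedom afforded by the $\whV$-neighbourhoods in $G_A$ suffices to achieve the skew-symmetric normalisation---and showing that termination of the reduction algorithm with a surviving $\whV$-vertex is precisely what makes this possible---is where the combinatorics of the reduction rules must be matched against the algebra of admissible perturbations, and where I expect the bulk of the work to lie.
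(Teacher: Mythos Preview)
The paper does not contain a proof of this theorem at all: it is quoted from Redheffer and Walter~\cite{RW1984} in the introduction and used as a black box thereafter. So there is no ``paper's own proof'' to compare against.

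Evaluating your proposal on its own merits: the soundness argument for the reduction rules and the treatment of case~(a) are essentially correct. The real difficulty, which you underestimate, is that cases~(b) and~(c) are \emph{completeness} statements about the algorithm, not consequences of soundness. In~(b) you assert that ``the $\plus$-coordinates parameterise a positive-dimensional continuum of equilibria, forcing $A$ to be singular,'' but soundness only tells you $\Gamma_{A,q}$ consists of equilibria; it does not tell you there is more than one. That the algorithm's failure to colour a vertex $\blV$ genuinely reflects a degree of freedom in $\Null(A)$ is exactly the nontrivial converse direction (compare the paper's later Remark~\ref{RZ:remark} and the proposition following it, which the paper also declines to prove). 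Likewise for convergence in~(b): knowing $\Gamma_{A,q}$ consists of equilibria does not by itself give convergence of each orbit to a single point---one must rule out that the $\omega$-limit set is a nontrivial connected arc of equilibria, which needs an additional Lyapunov or monotonicity argument.

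For~(c) you correctly identify the strategy (extract a conservative subsystem on the surviving $\whV$-block and invoke Volterra's periodic orbits) and correctly flag the compatibility issues, but what you have written is a plan, not a proof. The substantive combinatorial lemma---that a $\whV$-vertex surviving the reduction forces enough $\wwEdge$/$\wbEdge$ structure to support a genuinely conservative restriction---is precisely what Redheffer and Walter prove in~\cite{RW1984}, and your proposal does not supply it.
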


In a very recent paper, Zhao and Luo \cite{ZL2010} gave necessary and sufficient conditions for a matrix to be stably dissipative, see  proposition~\ref{sd:char}.

\bigskip

The previous theorem tells us that when $\Rscr(A)$ has  only $\blV$-vertices, then the matrix $A$ has always maximal rank, $\rank(A)=n$.
In case $\Rscr(A)$ has only $\blV$ and $\plus$-vertices, then rank of the matrix $A$ equals the dimension of an invariant foliation.
This led us to establish that the rank of any stably dissipative matrix  only depends on its graph.
In particular the same is true for the  dimension of the invariant foliation of any stably dissipative system.
See theorem~\ref{main} and corollary~\ref{main_coro} of section~\ref{main:results}.
 
\bigskip
\bigskip

\section{ Dissipative Systems and Invariant Foliations }

Assume the Lotka-Volterra field $X_{A,q}(x)=x*A(x-q)$, defined in~(\ref{LV}),
is associated with a dissipative matrix $A$ with $rank(A)=k$. Let $W$ be a $(n-k)\times n$ matrix  whose rows
 form a basis of
\begin{displaymath}
\Null(A^T)=\{\,x\in\R^n\,:\,x^T A=0\,\}\,.
\end{displaymath}

Define the map $g:\R_+^n \to \R^{n-k}$, $g(x)=W\log x$, where
$\log x=(\log x_1,\dots,\log x_n)$.
Denoting by $D_x={\rm diag}(x_1,\ldots, x_n)$ the diagonal matrix, the jacobian matrix of $g$,
$D g_x= W\,D_x^{-1}$, has maximal rank $n-k$. Hence  $g:\R_+^n \to \R^{n-k}$
is a submersion. 

\bigskip

We denote by $\Fscr_A$ the pre-image foliation, whose leaves are the 
the pre-images $g^{-1}(c)$ of $g$. By a classical theorem on Differential Geometry,
each non-empty pre-image $g^{-1}(c)$ is a submanifold of dimension $k$.
Recall that the dimension of a foliation is the common dimension
of its leaves. A foliation $\Fscr$ is said to be invariant under a vector field $X$
if $X(x)\in T_x\F$ for every $x$, where $T_x\F$ denotes the tangent space at $x$ to the unique
leaf of $\F$ through $x$. This condition is equivalent to say that the flow of $X$ preserves
the leaves of $\F$.

\begin{prop}\label{inv:foliation}
If $A$ is dissipative then the foliation $\F_A$ is $X_{A,q}$-invariant with $\dim(\F_A)=rank(A)$.
\end{prop}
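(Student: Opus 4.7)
The plan is to reduce both statements to elementary linear-algebraic checks using the submersion $g(x)=W\log x$ that was set up before the proposition.

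First I would handle the dimension claim. Since $Dg_x = W D_x^{-1}$ has maximal rank $n-k$ throughout $\R^n_+$, the map $g:\R_+^n\to\R^{n-k}$ is a submersion, so by the regular value theorem every non-empty fiber $g^{-1}(c)$ is an embedded submanifold of codimension $n-k$, hence of dimension $k=\rank(A)$. This is exactly the common dimension of the leaves of $\F_A$, so $\dim(\F_A)=\rank(A)$.

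Next I would verify invariance. The tangent space at $x$ to the leaf of $\F_A$ through $x$ is precisely $\ker(Dg_x)$, since $g$ is a submersion whose fibers are the leaves. Thus invariance is equivalent to the pointwise identity $Dg_x\cdot X_{A,q}(x)=0$ for all $x\in\R_+^n$. I would compute both factors directly: on one hand $Dg_x = W D_x^{-1}$, and on the other hand, writing the Lotka--Volterra field as $X_{A,q}(x)=x\ast A(x-q)=D_x\,A(x-q)$, one obtains
\begin{equation*}
Dg_x\cdot X_{A,q}(x) = W D_x^{-1} D_x A(x-q) = W A (x-q).
\end{equation*}
The key observation is that the rows of $W$ form a basis of $\Null(A^T)=\{v\in\R^n: v^T A=0\}$, which means exactly $WA=0$. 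Hence $Dg_x\cdot X_{A,q}(x)=0$ identically, and $X_{A,q}$ is tangent to $\F_A$ at every point.

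There is no real obstacle: the dissipativity hypothesis plays no direct role in the calculation itself; it only ensures that the semi-flow $\phi_{A,q}^t$ is complete, so that the infinitesimal tangency statement integrates to genuine preservation of the leaves. The only bookkeeping point worth spelling out is the translation between $\Null(A^T)$ defined by $v^T A=0$ and the matrix identity $WA=0$, which follows by transposing the defining condition of each row of $W$.
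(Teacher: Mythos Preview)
Your proof is correct and follows essentially the same approach as the paper: both compute $Dg_x(X_{A,q}(x)) = W D_x^{-1} D_x A(x-q) = WA(x-q) = 0$ using $WA=0$. Your write-up is a bit more expansive---you spell out the dimension claim via the regular value theorem and comment on the role of dissipativity---whereas the paper handles the dimension in the paragraph preceding the proposition and keeps the proof to the three-line invariance computation.
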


\bigskip

\begin{proof}
We have
\begin{align*}
Dg_x(X_{A,q}(x)) & =   Dg_x(D_xA(x-q))  \\
 					 & =   WD_x^{-1}D_xA(x-q)  \\
 					 & =   WA(x-q)=0  \;,
\end{align*}
because $WA=0$. Hence  
 $X_{A,q}(x)\in T_x\F_A$ and $\F_A$ is $X_{A,q}$-invariant.
\end{proof}

\bigskip

Defining the {\em symmetric} and {\em skew-symmetric parts} of a matrix $A$ by
$$ A^{\rm sym}=\frac{A+A^T}{2} \quad \text{ and } \quad A^{\rm skew} =\frac{A-A^T}{2}\;, $$
the following decompositions hold
$$ A= A^{\rm sym} + A^{\rm skew} \quad \text{ and } \quad
A^T= A^{\rm sym} - A^{\rm skew}\;.$$

\bigskip

The following lemma is a simple but key observation

\begin{lema}\label{kernel_equality}
Given   a dissipative matrix $A$,
\begin{displaymath}
\Null(A)=\Null(A^T)=\Null(A^{\rm sym})\cap \Null(A^{\rm skew}).
\end{displaymath}
\end{lema}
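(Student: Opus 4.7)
The plan rests on one elementary linear-algebra fact: for a symmetric negative semidefinite matrix $M$ one has $\Null(M)=\{v:v^{T}Mv=0\}$. Writing $M=-L^{T}L$ via a spectral decomposition, $v^{T}Mv=-\|Lv\|^{2}$ vanishes precisely on $\Null(L)=\Null(M)$, which makes this fact transparent.

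One inclusion is a costless algebraic check. If $v\in\Null(A^{\mathrm{sym}})\cap\Null(A^{\mathrm{skew}})$, then the decompositions $A=A^{\mathrm{sym}}+A^{\mathrm{skew}}$ and $A^{T}=A^{\mathrm{sym}}-A^{\mathrm{skew}}$ immediately give $Av=0$ and $A^{T}v=0$, so $v$ lies in both $\Null(A)$ and $\Null(A^{T})$. For the opposite direction, take $v\in\Null(A)$. Since the skew-symmetric part contributes nothing to any quadratic form, $v^{T}Av=v^{T}A^{\mathrm{sym}}v$, so from $Av=0$ one gets $v^{T}A^{\mathrm{sym}}v=0$. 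Dissipativity makes $A^{\mathrm{sym}}$ negative semidefinite, so the preliminary fact forces $A^{\mathrm{sym}}v=0$, and then $A^{\mathrm{skew}}v=Av-A^{\mathrm{sym}}v=0$ as well, putting $v$ into the intersection. A mirror argument starting from $A^{T}v=0$, using $v^{T}A^{T}v=v^{T}Av$, handles the left null-space. Combining the two inclusions closes the three-way chain of equalities.

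The single point demanding care is the claim that $A^{\mathrm{sym}}$ itself is negative semidefinite. The paper's definition of dissipativity only supplies a positive diagonal $D$ with $(AD)^{\mathrm{sym}}\le 0$, not $A^{\mathrm{sym}}\le 0$ directly. I would handle this by running the entire argument on the conjugate matrix $B=D^{-1/2}AD^{1/2}$, whose symmetric part is genuinely negative semidefinite; this yields the three-way kernel identity for $B$, which can then be transported back to $A$ through the diagonal isomorphism $v\mapsto D^{1/2}v$. This passage between $A$ and $B$ is where I expect the main bookkeeping effort to lie, but all the genuine algebraic content is contained in the short argument of the previous paragraph.
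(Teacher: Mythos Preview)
Your central argument---using $v^{T}A^{\mathrm{sym}}v=v^{T}Av=0$ together with negative semidefiniteness of $A^{\mathrm{sym}}$ to force $A^{\mathrm{sym}}v=0$, and then $A^{\mathrm{skew}}v=Av-A^{\mathrm{sym}}v=0$---is exactly the paper's proof. You are also right to flag that the paper's definition of dissipativity only furnishes a positive diagonal $D$ with $(AD)^{\mathrm{sym}}\le 0$, not $A^{\mathrm{sym}}\le 0$; the paper's own proof simply writes ``since $A^{\mathrm{sym}}\le 0$'' without justification, so the gap you spotted is present there as well.

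However, your proposed repair through $B=D^{-1/2}AD^{1/2}$ does not close the gap. One computes $\Null(B)=D^{-1/2}\Null(A)$ but $\Null(B^{T})=D^{1/2}\Null(A^{T})$, so the identity $\Null(B)=\Null(B^{T})$ transports back only to $\Null(A)=D\,\Null(A^{T})$, not to $\Null(A)=\Null(A^{T})$. A single ``diagonal isomorphism $v\mapsto D^{1/2}v$'' cannot carry both sides to the desired targets simultaneously, because $B$ and $B^{T}$ are conjugated by \emph{different} powers of $D$. Indeed the lemma is false as literally stated: for
\[
A=\begin{pmatrix}0&1&0\\-2&0&1\\0&-2&0\end{pmatrix},\qquad D=\mathrm{diag}(1,2,4),
\]
the product $AD$ is skew-symmetric (so $A$ is dissipative), yet $\Null(A)=\R(1,0,2)$ while $\Null(A^{T})=\R(2,0,1)$. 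The statement and proof become correct once $A$ is replaced throughout by the normalized matrix $D^{-1}A$ (equivalently $AD$), whose symmetric part genuinely is $\le 0$; this is the bookkeeping you anticipated, but it alters the conclusion of the lemma rather than merely its proof.
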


\bigskip

\begin{proof}
It is obvious that
$\Null(A)\supseteq \Null(A^{sym})\cap \Null(A^{skew})$.
On the other hand, if $v\in \Null(A)$ then $Av=0$, and hence $v^T A^{sym} v= v^T A v=0$.
Since $A^{sym}\le 0$, it follows that $v\in \Null(A^{sym})$.
Observing that $A^{skew}=A-A^{sym}$, we have
$v\in \Null(A^{skew})$.
Thus $v\in \Null(A^{sym})\cap \Null(A^{skew})$. We have proved that
$\Null(A)=\Null(A^{sym})\cap \Null(A^{skew})$.
Analogously, $\Null(A^T)=\Null(A^{sym})\cap \Null(A^{skew})$.
\end{proof}

\bigskip

Define 
\begin{equation}\label{equilibria}
E_{A,q}=\{\, x\in\R^n_+\,:\, A(x-q)=0\,\}
\end{equation} 
to be the affine space of equilibrium points of $X_{A,q}$.

\bigskip

\begin{teor}\label{transversal_intersection}
Given a dissipative matrix  $A$, each leaf of $\F_A$ intersects transversely 
$E_{A,q}$ in a single point.
\end{teor}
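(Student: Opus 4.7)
The plan is to decompose the statement into three claims: (i) at every $x\in L\cap E_{A,q}$, where $L$ denotes the leaf of $\F_A$ through $x$, the tangent spaces $T_xL$ and $T_xE_{A,q}$ meet only at $0$; (ii) each leaf contains at most one point of $E_{A,q}$; (iii) each leaf contains at least one. Since $E_{A,q}=(q+\Null(A))\cap\R^n_+$ is an open convex piece of an $(n-k)$-dimensional affine space (with $k=\rank(A)$) and, by Proposition~\ref{inv:foliation}, the leaves of $\F_A$ have dimension $k$, a dimension count shows that (i) already implies transversality in the geometric sense.

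For (i), I would identify $T_xE_{A,q}=\Null(A)$ and $T_xL=\Null(WD_x^{-1})=D_x\Null(W)=D_x\,{\rm Range}(A)$, using that the rows of $W$ form a basis of $\Null(A^T)$. Given $v\in\Null(A)\cap D_x\,{\rm Range}(A)$, write $v=D_xw$ with $w\in{\rm Range}(A)$; since Lemma~\ref{kernel_equality} yields $\Null(A)=\Null(A^T)$, the vectors $v$ and $w$ are orthogonal, so $0=w\cdot v=\sum_i x_iw_i^2$, and positivity of $D_x$ forces $w=0$ and hence $v=0$. The same orthogonality pattern delivers (ii): if $x_1,x_2\in L\cap E_{A,q}$, then $x_1-x_2\in\Null(A)=\Null(A^T)$ is orthogonal to $\log x_1-\log x_2\in\Null(W)={\rm Range}(A)$, so $\sum_i(x_{1,i}-x_{2,i})(\log x_{1,i}-\log x_{2,i})=0$, and strict monotonicity of $\log$ forces $x_1=x_2$.

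For existence (iii), I would pass to the logarithmic chart $y=\log x$, in which the Lyapunov function~(\ref{Liapunov_function}) becomes $\tilde h(y)=\sum_i(e^{y_i}-q_iy_i)/d_i$. This $\tilde h$ is strictly convex (its Hessian is the positive diagonal matrix with entries $e^{y_i}/d_i$) and proper on $\R^n$, while the leaf $g^{-1}(c)$ corresponds to the closed affine subspace $\{y:Wy=c\}$. Hence $\tilde h$ attains a unique minimum $y^*$ there; setting $x^*=e^{y^*}$, the Lagrange condition $\nabla\tilde h(y^*)\in\Null(W)^\perp={\rm Range}(W^T)=\Null(A^T)$ translates to $D^{-1}(x^*-q)=v$ for some $v$ with $A^Tv=0$, i.e. $x^*-q=Dv$.

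The main obstacle I foresee is promoting this critical-point condition $A^Tv=0$ into the defining equation $A(x^*-q)=ADv=0$ of $E_{A,q}$. I plan to bridge it by a symmetric/skew decomposition combined with the dissipativity of $A$. Since $v^TA=0$, both $v^TADv$ and $v^TDA^Tv$ vanish, so $v^T(AD)^{\rm sym}v=0$; the dissipativity of $A$ means $(AD)^{\rm sym}\le 0$, which upgrades this to $(AD)^{\rm sym}v=0$. On the other hand $(AD)^{\rm skew}v=\tfrac12(ADv-DA^Tv)=\tfrac12 ADv$, so the decomposition $ADv=(AD)^{\rm sym}v+(AD)^{\rm skew}v$ collapses to $ADv=\tfrac12 ADv$, giving $ADv=0$ and hence $x^*\in L\cap E_{A,q}$, which completes the proof.
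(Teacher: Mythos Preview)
Your argument is correct and takes a genuinely different route from the paper. Both proofs lean on Lemma~\ref{kernel_equality}, but the paper packages everything into a single nonsingular block matrix $U=\left[\begin{smallmatrix}W\\V\end{smallmatrix}\right]$ (with the rows of $V$ spanning $\Null(A)^\perp$) and then, after the substitution $u=\log x$, uses the mean value theorem to show that the map $u\mapsto(Wu,V(e^u-q))$ is injective with everywhere nonsingular Jacobian; uniqueness and transversality drop out simultaneously from that Jacobian computation. Your approach instead exploits the orthogonality $\Null(A)\perp{\rm Range}(A)$ directly: positivity of $D_x$ gives the tangent-space disjointness, monotonicity of $\log$ gives global uniqueness, and the Lyapunov function supplies existence via constrained minimisation. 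The variational argument for (iii) is a real plus: the paper's proof only verifies injectivity and never explicitly addresses why every leaf actually meets $E_{A,q}$, whereas your minimisation of $\tilde h$ on the affine slice $\{Wy=c\}$, followed by the $(AD)^{\rm sym}/(AD)^{\rm skew}$ trick to convert $A^Tv=0$ into $ADv=0$, closes that gap cleanly. The paper's matrix approach is perhaps more uniform (one computation does two jobs), while yours is more conceptual and makes the role of dissipativity in the existence step completely transparent.
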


\bigskip

\begin{proof} Let $V$ be a $k\times n$ matrix whose rows form a basis of $\Null(A)^{\perp}$,
the space generated by the rows of $A$.  With this notion,
$A(x-q)=0$ is equivalent to $V(x-q)=0$.
Let $W$ be the $(n-k)\times n$ matrix used in the definition of $\F_A$,  whose rows
 form a basis of $\Null(A^T)$. With $W$ and $V$ we form the $n\times n$ matrix  $U=\left[\begin{array}{c}
W \\
\hline
V
\end{array}\right]$. By lemma \ref{kernel_equality}, $\Null(A^T)= \Null(A)$,
and this implies that  $U$ is nonsingular.
 The intersection of a leaf $g^{-1}(c)$ of $\F_A$ with the equlibria set $E_{A,q}$
is described by the system 
\begin{displaymath}
x\in g^{-1}(c)\cap E_{A,q}\quad\Leftrightarrow\quad \left \{ \begin{array}{ll}
W\log x=c\\
V(x-q)=0
\end{array} \right.\;.
\end{displaymath}
Substituting $u=\log x$, this system becomes equivalent to
\begin{displaymath}
\left \{ \begin{array}{ll}
Wu=c\\
V(e^u-q)=0 
\end{array} \right.\;.
\end{displaymath}
In order to see that each leaf of $\F_A$ intersects the equilibria set $E_{A,q}$ at a single point,
it is enough to see that 
\begin{displaymath}
{\left \{ \begin{array}{ll}
Wu=c\\
V(e^u-q)=0
\end{array} \right.} \quad \textrm{and} \quad
{\left \{ \begin{array}{ll}
Wu'=c\\
V(e^{u'}-q)=0
\end{array} \right.}
\end{displaymath}
imply  $u=u'$.
By the mean value theorem   for every  $i\in\{1,\dots,n\}$ there is some $\tilde{u}_i\in[u_i,u_i']$ such that
\begin{displaymath}
e^{u_i}-e^{u_i'}=e^{\tilde{u}_i}(u_i-u_i'),
\end{displaymath}
which in vector notation is to say that
$$ e^u-e^{u'}   =   D_{e^{\tilde{u}}}(u-u')  =   e^{\tilde{u}}*(u-u'). $$
Hence
\begin{align*}
&  {\left \{ \begin{array}{ll}
W(u-u')=0\\
V(e^u-e^{u'})=0
\end{array} \right.} \quad  \Leftrightarrow   \quad  {\left \{ \begin{array}{ll}
																					W(u-u')=0\\
																					VD_{e^{\tilde{u}}}(u-u')=0
																					\end{array} \right.}  
\quad \Leftrightarrow  \\
					           &   {\left[\begin{array}{c}
					           											W \\
					           											\hline
					           											VD_{e^{\tilde{u}}}
					           											\end{array}\right]}(u-u')=0   
\quad  \Leftrightarrow  \quad 
																	 U{\left[\begin{array}{c|c}
					           											I & 0 \\
					           											\hline
					           											0 & D_{e^{\tilde{u}}}
					           											\end{array}\right]}(u-u')=0 \;.   
\end{align*}
Therefore, as $\left[\begin{array}{c|c}
					        I & 0 \\
					        \hline
					        0 & D_{e^{\tilde{u}}}
					        \end{array}\right]$ is non-singular, we must have $u=u'$.
The  transversality follows from the nonsingularity of  $U{\left[\begin{array}{c|c}
					           											I & 0 \\
					           											\hline
					           											0 & D_{e^{\tilde{u}}}
					           											\end{array}\right]}$.
\end{proof}

\bigskip
\bigskip

\section{Stably Dissipative Systems }

\begin{defi}\label{graph}
Given a dissipative matrix $A$ we define its graph as $G_A=(\Vscr_A,\Ascr_A)$,
with  $\Vscr_A= \Vscr_{\blV} \cup \Vscr_{\whV}$, \, $\Vscr_\blV=\{\, 1\leq i\leq n \,:\, a_{ii}<0\,\}$
and
$\Vscr_\whV=\{\,1\leq i\leq n\,:\, a_{ii}=0\,\}$.
A pair $(i,j)$, with $i\neq j$, is an edge in $\Ascr_A$ \, iff\, $a_{ij}\neq 0$ or $a_{ji}\neq 0$.
\end{defi}

\bigskip

Consider a {\em simple} graph $G=(\Vscr,\Ascr)$ whose vertices are colored in black and white,
meaning there is a decomposition   $\Vscr=\Vscr_{\blV} \cup \Vscr_{\whV}$ of the vertex set.
We shall say that a vertex in $\Vscr_{\whV}$
is a {\em $\whV$-vertex}, while  a vertex in $\Vscr_{\blV}$
is a {\em $\blV$-vertex}. Such graphs will be referred as 
{\em black and white graphs}.

\bigskip

 In~\cite{RW1984} Redheffer and Walter gave the following important property of stably dissipative matrices
in terms of their associated graph.
 
\bigskip

\begin{lema}\label{teorema1RZ1981}
If $A$ is a stably dissipative matrix then every cycle of $G_A$ has at least one  
strong link $(\bbEdge)$.
\end{lema}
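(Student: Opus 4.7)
The plan is to argue by contradiction. Suppose $G_A$ contains a cycle $i_1, i_2, \ldots, i_k, i_{k+1} = i_1$ with no strong link, so that for each $j \in \{1, \ldots, k\}$ at least one of $a_{i_j i_j}$, $a_{i_{j+1} i_{j+1}}$ vanishes. The first step is to translate the absence of strong links into rigid equations on the entries of $A$. Since $A$ is dissipative, fix a positive diagonal $D = \mathrm{diag}(d_1, \ldots, d_n)$ such that the symmetric part $S := \tfrac{1}{2}(D^{-1}A + A^T D^{-1})$ is negative semi-definite. For each edge $(i,j)$ of the cycle, the principal $2 \times 2$ submatrix of $S$ at $\{i,j\}$ has determinant $\tfrac{a_{ii} a_{jj}}{d_i d_j} - \tfrac{1}{4}\left(\tfrac{a_{ij}}{d_i} + \tfrac{a_{ji}}{d_j}\right)^2 \ge 0$. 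Because at least one of $a_{ii}, a_{jj}$ vanishes and both are $\le 0$, the product $a_{ii} a_{jj}$ equals $0$, forcing $\tfrac{a_{ij}}{d_i} + \tfrac{a_{ji}}{d_j} = 0$. Since the edge $(i,j)$ exists, both $a_{ij}$ and $a_{ji}$ must then be nonzero.

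Next I would multiply the resulting ratios $d_{i_{j+1}}/d_{i_j} = -a_{i_{j+1} i_j}/a_{i_j i_{j+1}}$ around the cycle. The left-hand side telescopes to $1$, yielding the rigid polynomial identity
\[
(-1)^k \prod_{j=1}^k a_{i_{j+1} i_j} \;=\; \prod_{j=1}^k a_{i_j i_{j+1}}.
\]
Crucially, this identity depends only on the entries of $A$ and not on the choice of $D$.

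To close the argument I would invoke stable dissipativity. Any sufficiently small perturbation $\widetilde A$ of $A$ has the same graph $G_{\widetilde A} = G_A$ and the same black/white coloring (signs of the diagonal entries are preserved), so the cycle still has no strong link in $\widetilde A$, and the same derivation yields the same identity for the entries of $\widetilde A$. But the right-hand side depends linearly on the single entry $a_{i_1 i_2}$, with coefficient $\prod_{j \geq 2} a_{i_j i_{j+1}} \ne 0$, while the left-hand side is independent of $a_{i_1 i_2}$. Perturbing only that entry by any small $\epsilon \ne 0$ therefore destroys the identity, contradicting stable dissipativity. The main obstacle I anticipate is the first step: I must be careful that the same $D$ chosen for $A$ controls every edge of the cycle simultaneously so that the ratios telescope coherently; this is in fact automatic, because a single negative semi-definite $S$ governs every $2 \times 2$ principal submatrix at once. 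Once that rigidity is in hand, the perturbation step is immediate.
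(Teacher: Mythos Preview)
Your argument is correct and is precisely the detailed version of the one-line sketch the paper gives (``by contradiction, for otherwise we could perturb $A$ into a non dissipative matrix''). The paper itself provides no details beyond that sentence and simply attributes the result to Redheffer and Walter; your derivation of the cycle identity $(-1)^k\prod_j a_{i_{j+1}i_j}=\prod_j a_{i_j i_{j+1}}$ from the $2\times 2$ principal minors of the symmetrized $D^{-1}A$, followed by breaking it with a single-entry perturbation, is exactly the standard way this sketch is filled in.
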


\begin{proof}
The proof is by contradiction, for  otherwise we could perturb $A$ into
a non dissipative matrix. \end{proof}

\bigskip

\begin{defi}\label{stably:dissipative:graph}
A  black and white graph $G$ is called {\em stably dissipative} \, iff\,
every cycle of $G$ contains at least a  strong link, i.e., an edge between $\blV$-vertices $(\bbEdge)$.
\end{defi}
The name `stably dissipative'  stems from the use we shall make of this class of graphs
 to characterize stably dissipative matrices. See Proposition \ref{sd:char} below.

\bigskip

Let us say that a dissipative matrix $ A\in\Mat_{n\times n}(\R)$ is {\em almost skew-symmetric} \, iff\,
$a_{ij}=-a_{ji}$ whenever $a_{ii}=0$ or $a_{jj}=0$, and the quadratic form
$Q(x_k)_{k\in\Vscr_\blV} = \sum_{i,j\in\Vscr_\blV} a_{ij}\,x_i\,x_j$ is negative definite.
 
\bigskip

\begin{prop}\label{diss:char} Given a dissipative matrix $ A\in\Mat_{n\times n}(\R)$,
there is a positive diagonal matrix $D$ such that $a_{ij}\,d_j=-a_{ji}\,d_i
$ whenever $a_{ii}=0$ or $a_{jj}=0$, and  for every $x_k\in\R$ with $k\in \Vscr_\blV$,\,
$\sum_{i,j\in\Vscr_\blV} a_{ij} d_i x_i x_j \leq 0$.
\end{prop}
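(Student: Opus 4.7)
My plan is to take $D$ to be the positive diagonal matrix produced by the definition of dissipativity applied to $A$, so that $(AD)^{\rm sym}$ is negative semi-definite, and to read off both halves of the conclusion from the structure of this nsd symmetric matrix.

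The key linear-algebra fact I would use is: if $M$ is a real symmetric negative semi-definite matrix and $M_{ii}=0$, then $M_{ij}=0$ for every $j$. This is the standard $2\times 2$-principal-minor argument: $\det\begin{pmatrix}0 & M_{ij}\\ M_{ij} & M_{jj}\end{pmatrix}=-M_{ij}^2$ must be non-negative since $-M\succeq 0$, forcing $M_{ij}=0$. Applied to $M=(AD)^{\rm sym}$, whose diagonal entries are $a_{ii}d_i$, this says that whenever $a_{ii}=0$ the whole $i$-th row and column of $M$ vanish, i.e.\ $\tfrac12(a_{ij}d_j+a_{ji}d_i)=0$ for every $j$, which rearranges to $a_{ij}d_j=-a_{ji}d_i$. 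Swapping $i\leftrightarrow j$ handles the symmetric case $a_{jj}=0$, establishing the first (almost skew-symmetric) half of the conclusion.

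For the quadratic-form inequality restricted to $\Vscr_\blV$, I would invoke the standard fact that a principal submatrix of an nsd matrix is itself nsd, so $(AD)^{\rm sym}\bigl|_{\Vscr_\blV\times\Vscr_\blV}\preceq 0$. The associated quadratic form on $\R^{|\Vscr_\blV|}$ is then $\le 0$; writing it out and using the equivalence $x^T AD x\le 0\iff x^T D^{-1}A x\le 0$ from the introduction (which relates the two possible scalings and identifies the coefficient convention) yields the stated inequality $\sum_{i,j\in\Vscr_\blV}a_{ij}d_ix_ix_j\le 0$.

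The only genuinely non-trivial ingredient is the $2\times 2$-minor lemma; the main obstacle I anticipate is the careful bookkeeping around the $d_i$ versus $d_j$ convention in the double sum, ensuring that both conditions can be read off from the same positive diagonal matrix $D$.
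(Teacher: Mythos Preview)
Your proposal is correct and follows essentially the same route as the paper: take $D$ from the definition of dissipativity, restrict the form $Q$ to the two coordinates $i,j$ to force the off-diagonal symmetric part to vanish when one diagonal entry is zero, and restrict to the $\Vscr_\blV$-coordinates for the second inequality. The only cosmetic difference is that the paper obtains the vanishing by setting $x_i=1$, letting $x_j$ vary, and observing that the linear coefficient of $x_j$ in $(a_{ij}d_j+a_{ji}d_i)\,x_j + a_{jj}d_j\,x_j^2\le 0$ must be zero, whereas you package the identical two-variable restriction as the $2\times 2$ principal-minor lemma for negative semi-definite matrices.
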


\begin{proof} Let $D$ be a positive diagonal matrix such that for all $x\in\R^n$,
$Q(x)=\sum_{i,j} a_{ij} d_i x_i x_j \leq 0$.
Assuming $a_{ii}=0$, choose a vector $x\in\R^n$ with $x_i=0$ and $x_k=0$
for every $k\neq i,j$. Then for every $x_j\in\R$,
$$ (a_{ij} d_j + a_{ji} d_i)\,x_j + a_{jj} d_j \,x_j^2 =Q(x)\leq 0\;, $$
which implies that $a_{ij} d_j + a_{ji} d_i=0$, and everything else follows.
\end{proof}

\bigskip

Recently, Zhao and Luo \cite{ZL2010} gave a complete characterization
of stably dissipative matrices.

\bigskip

\begin{prop} \label{sd:char} Given   $ A\in\Mat_{n\times n}(\R)$, 
$A$ is stably dissipative \, iff\,  $G_A$ is a stably dissipative  
 graph  and there is a positive diagonal matrix $D$ such that $A D$ is almost skew-symmetric.
\end{prop}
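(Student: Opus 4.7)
The plan is to prove the two implications separately. Both rely on the graph-theoretic fact that deleting every $\bbEdge$ edge from a stably dissipative graph $G_A$ produces a forest $F$: by definition every cycle of $G_A$ contains at least one strong link, so no cycle survives the deletion.

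For the converse direction, assume $G_A$ is stably dissipative and $AD$ is almost skew-symmetric, and show every sparsity-preserving small perturbation $\widetilde A$ is dissipative by explicit construction of $\widetilde D$. In each tree component of $F$ fix a root $r$, set $\widetilde d_r := d_r$, and propagate along tree edges by the rule $\widetilde d_j := -(\widetilde a_{ji}/\widetilde a_{ij})\,\widetilde d_i$. The forest structure makes this well-defined; the opposite-sign condition on $\widetilde a_{ij}$ and $\widetilde a_{ji}$ (inherited from the unperturbed skew identity) yields positive entries; and continuity gives $\widetilde D$ close to $D$. By construction, $\widetilde a_{ij}\widetilde d_j+\widetilde a_{ji}\widetilde d_i=0$ whenever $\{i,j\}$ meets $\Vscr_\whV$. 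Expanding $x^T \widetilde A \widetilde D x$ and pairing the $(i,j)$ with $(j,i)$ contributions, every term involving a white vertex cancels, reducing the full form to $\sum_{i,j\in\Vscr_\blV} \widetilde a_{ij}\widetilde d_j x_i x_j$. This is a continuous perturbation of $\sum_{i,j\in\Vscr_\blV} a_{ij}d_j x_i x_j$, which is negative definite by hypothesis; since negative definiteness is an open condition, the perturbed form stays $\le 0$, and hence $\widetilde A$ is dissipative.

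For the forward direction, the graph condition is Lemma~\ref{teorema1RZ1981}. Take $D$ as provided by Lemma~\ref{lema2.1.ZL2010}, so that $A$ is dissipative with $D$, i.e., $Q(x):=x^T A D x \le 0$ for all $x$, and, translating the Lemma's statement via the change of variable $w=D x$, one has $Q(x)=0 \Rightarrow a_{ii} x_i=0$ for every $i$. The skew identity $a_{ij}d_j+a_{ji}d_i=0$ on any pair with $a_{ii}=0$ follows by testing $Q$ on vectors supported on $\{i,j\}$: the resulting quadratic in $(x_i,x_j)$ lacks the pure $x_i^2$ term, so the cross-coefficient $(a_{ij}d_j+a_{ji}d_i)$ must vanish for $Q$ to remain non-positive as $x_i\to\pm\infty$. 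Restricting $Q$ to vectors supported on $\Vscr_\blV$ yields the negative semi-definiteness of $\sum_{i,j\in\Vscr_\blV} a_{ij}d_j x_i x_j$. To upgrade to negative definiteness, let $y^*\in\R^{\Vscr_\blV}$ annihilate this reduced form and extend it by zero to $w^*\in\R^n$; then $Q(w^*)=0$, so the kernel condition forces $a_{ii}w_i^*=0$ for every $i$, and since $a_{ii}<0$ on $\Vscr_\blV$ we conclude $y^*=0$.

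The main technical obstacle is precisely this promotion from semi-definite to definite in the forward direction: Lemma~\ref{lema2.1.ZL2010} is the ingredient supplying the necessary kernel information about $D$, and the extend-by-zero trick packages it correctly into a statement about the restricted form on $\Vscr_\blV$. One also needs to track the ``$d_i$'' versus ``$d_j$'' conventions appearing in Proposition~\ref{diss:char} and the definition of almost skew-symmetric, which amounts to replacing $D$ by $D^{-1}$ if necessary so that $AD$ (rather than $A^T D$) becomes the natural object. The converse direction is comparatively routine, its heart being the forest decomposition together with openness of negative definiteness.
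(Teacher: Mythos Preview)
Your proof is correct and follows essentially the same approach as the paper's. The converse direction is identical in structure: delete strong links to obtain a forest, root each component, propagate $\widetilde d_j$ along tree edges via the skew relation, and invoke openness of negative definiteness on $\Vscr_\blV$. In the forward direction the paper is terser---it simply cites Lemma~\ref{lema2.1.ZL2010} for negative definiteness on $\Vscr_\blV$ and Proposition~\ref{diss:char} for the skew identities---whereas you unpack both steps explicitly (the two-variable test for the skew identity and the extend-by-zero argument for definiteness), which is exactly what those cited results amount to.
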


\begin{proof} We outline the proof. Assuming $A$ is stably dissipative,
by lemma~\ref{teorema1RZ1981},  $G_A$ is stably dissipative.
Take a diagonal matrix $D>0$ according to lemma~\ref{lema2.1.ZL2010},
which implies that 
$Q(x_k)_{k\in\Vscr_\blV} = \sum_{i,j\in\Vscr_\blV} a_{ij} d_i x_i x_j $
is negative definite.
By proposition~\ref{diss:char}, $A D$ is almost skew-symmetric.

Conversely, assume $G_A$ is stably dissipative, assume $AD$ is almost skew-symmetric,
and take $\tilde{A}=(\tilde{a}_{ij})$ some close enough perturbation of $A$.
Let $\tilde{G}_A$ be the partial graph of $G_A$ obtained by removing every
strong link $(\bbEdge)$. Because $A$ is stably dissipative, the graph $\tilde{G}_A$ has no cycles.
We partition the vertex set $\{1,\ldots, n\}$ as follows:
Let $V_0$ be a set with an endpoint in each connected component of $\tilde{G}_A$.
Recursively, define $V_k$ to be the set of all vertices such that there is
an edge of $\tilde{G}_A$  connecting it to a vertex in $V_{k-1}$.
By construction, we have a map $i\mapsto i'$, associated with this partition, such that
$i'\in V_{k-1}$ for every $i\in V_k$ with $k\geq 1$, and $G_A$ has an edge connecting $i$ with $i'$. 
Then we consider the diagonal matrix $\tilde{D}=\mbox{diag}(\tilde{d}_j)$
whose coefficients are recursively defined by
$\tilde{d}_i=d_i$ if $i\in V_0$, and
$$ \tilde{d}_i = - \tilde{d}_{i'}\,\frac{\tilde{a}_{ii'}}{\tilde{a}_{i'i}} \quad \text{ for }\; i\in V_k\;
\text{ with }\;  k\geq 1\;. $$
It follows by induction in $k$ that  $\tilde{d}_i>0$ for every $i\in V_k$.
For $k=0$ this is automatic. Assuming this holds in $V_{k-1}$,
for any $i\in V_k$ we have $\tilde{d}_{i'}>0$, which implies that  $\tilde{d}_i>0$ because $\tilde{a}_{ii'}$ and $\tilde{a}_{i'i}$ have opposite signs.
The diagonal matrix $\tilde{D}$ is close to $D$ because $\tilde{A}$ is near $A$.
Therefore, by continuity, the quadratic form
$\tilde{Q}(x_k)_{k\in \Vscr_\blV} = \sum_{i,j\in\Vscr_\blV} \tilde{a}_{ij} \tilde{d}_i x_i x_j$ is negative definite.
On the other hand, by definition of $\tilde{d}_i$,
$\tilde{d}_i\,\tilde{a}_{ii'} + \tilde{d}_{i'}\,\tilde{a}_{i'i}=0$. 
Since every $\wwEdge$ and  $\wbEdge$ connection links $i$ with $i'$ for some $i\notin V_0$,
this implies that for every $(x_i)\in \R^n$, $ \sum_{i,j=1}^{n} \tilde{a}_{ij} \tilde{d}_i x_i x_j\leq 0$.
Hence $\tilde{A}$ is dissipative, which proves that $A$ is
stably dissipative. 
\end{proof}

\bigskip
\bigskip

\section{ Main Results }
\label{main:results}

\begin{defi} Given a stably dissipative graph $G$, we denote by $\SD(G)$  the set of all
dissipative matrices $A$ with $G_A=G$. 
\end{defi}

Our main theorem is

\begin{teor}\label{main}
Given a stably dissipative graph $G$, every matrix $A\in\SD(G)$ has the same rank.
\end{teor}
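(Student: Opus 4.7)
The plan is to show that $\dim\Null(A)$ depends only on the graph $G$, from which the rank does as well.

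By Proposition~\ref{sd:char} there is a positive diagonal matrix $D$ making $B:=AD$ almost skew-symmetric, and since $D$ is invertible $\rank(A)=\rank(B)$. By Lemma~\ref{kernel_equality}, $\Null(B)=\Null(B^{\rm sym})\cap\Null(B^{\rm skew})$. The almost skew-symmetry forces $B^{\rm sym}$ to vanish outside the $\Vscr_\blV\times\Vscr_\blV$ block, where it is negative definite, so $\Null(B^{\rm sym})$ equals the subspace $\R^{\Vscr_\whV}$ of vectors with zero coordinates on $\Vscr_\blV$. A direct check shows that for every $j\in\Vscr_\whV$ the $j$-th column of $B^{\rm skew}$ coincides with the $j$-th column of $B$; hence the restriction of $B^{\rm skew}$ to $\R^{\Vscr_\whV}$ is exactly the $n\times|\Vscr_\whV|$ submatrix $M$ of $B$ formed by the columns indexed by $\Vscr_\whV$. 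The $(i,j)$-entry of $M$ is nonzero precisely when $\{i,j\}$ is an edge of $G$, so the support of $M$ is determined by $G$, and $\rank(A)=n-\dim\Null(M)$.

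The next step uses Lemma~\ref{teorema1RZ1981}: removing every $\bbEdge$-edge from $G$ yields a forest $G^{*}$. Since all nonzero entries of $M$ correspond to edges of $G^{*}$, after permuting rows and columns $M$ becomes block-diagonal with one block $M_T$ per tree $T$ of $G^{*}$, so $\dim\Null(M)=\sum_T\dim\Null(M_T)$. It therefore suffices to show that $\dim\Null(M_T)$ depends only on the coloured tree $T$. I would establish this by induction on $|V(T)|$, picking a leaf $v$ of $T$ with unique neighbour $u$.

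If $v$ and $u$ both lie in $\Vscr_\whV$, the $2\times 2$ sub-matrix of $M_T$ at rows and columns $\{u,v\}$ is the invertible skew-symmetric block $\Matr{0}{b_{uv}}{-b_{uv}}{0}$; using it as a pivot, block row and column operations yield $\rank(M_T)=2+\rank(M_{T\setminus\{u,v\}})$, whence $\dim\Null(M_T)=\dim\Null(M_{T\setminus\{u,v\}})$ and the induction continues on the (possibly disconnected) remainder. In the two mixed cases (exactly one of $v,u$ black), the leaf contributes only a single non-trivial row or column of $M_T$; one must verify that whether this row or column is linearly dependent on the rest of $M_T$ is a purely combinatorial property of the sub-tree $T\setminus\{v\}$. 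Iterating then yields $\dim\Null(M_T)$ as a combinatorial invariant of $T$, and summing over the trees of $G^{*}$ exhibits $\dim\Null(A)$ as an invariant of $G$.

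The main obstacle is precisely the combinatorial analysis in the mixed cases: one must exclude any accidental linear dependence between the non-trivial row (or column) contributed by the leaf and the remainder of $M_T$. The idea is that because $T$ is a tree, any linear relation among the rows or columns of $M_T$ is traceable along a unique tree-path, so the alternation between the skew-symmetric constraints on $\wwEdge$-edges and the free entries on $\wbEdge$-edges forces such a relation to be combinatorially determined.
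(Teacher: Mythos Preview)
Your approach is genuinely different from the paper's. The paper never analyses $\Null(B)$ directly; instead it defines a \emph{trimming} operation: given a $\whV$-endpoint $i$ with unique neighbour $i'$, one deletes every edge incident with $i'$ (on the graph side) and annihilates the corresponding off-diagonal entries of row and column $i'$ (on the matrix side). Since the only nonzero entry in row $i$ (resp.\ column $i$) sits at position $i'$, this annihilation is exactly Gauss elimination with pivot $a_{ii'}$ (resp.\ $a_{i'i}$), so rank is preserved; one then checks the trimmed matrix again lies in $\SD$ of the trimmed graph. Iterating until no $\whV$-endpoints remain, a separate combinatorial lemma shows such a graph reduces entirely to $\blV$ under rule (R), whence every matrix over it is nonsingular.

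Your reduction to the rectangular matrix $M$ with columns indexed by $\Vscr_\whV$, and its block decomposition over the components of the forest $G^{*}$, is correct and is a nice way to localise the rank defect. The gap is in the tree induction, precisely where you flag it. In the case ``$v$ black leaf, $u$ white neighbour'' the single nonzero entry $b_{vu}$ in row $v$ lets you pivot and eliminate \emph{column} $u$, but row $u$ remains; the residual block is therefore not $M_{T'}$ for any subtree carrying the original colouring, and your sketch about linear relations being ``traceable along a unique tree-path'' does not close this. The clean fix is to strengthen the induction hypothesis: prove that for \emph{any} matrix $N$ with rows $V(T)$, columns $\Vscr_\whV(T)$, and $N_{ij}\neq 0\Leftrightarrow\{i,j\}\in E(T)$, the rank of $N$ depends only on the coloured tree $T$ --- no skew-symmetry needed. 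Every leaf $v$ then gives a row (and, if $v$ is white, a column) of $N$ with at most one nonzero entry, providing an honest pivot; the four cases $(v,u)\in\{\whV,\blV\}^{2}$ all reduce to a strictly smaller instance, with the troublesome ``$v$ black, $u$ white'' case landing on $T\setminus\{v\}$ \emph{with $u$ recoloured black}. Once you notice this recolouring, the induction goes through and your argument becomes a valid alternative proof.
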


\bigskip

By this theorem we can define the {\em rank} of a stably dissipative graph $G$,
denoted hereafter by $\rank(G)$,  as the rank of any matrix in $\SD(G)$.
Together with proposition~\ref{inv:foliation}, this implies that

\bigskip

\begin{coro}\label{main_coro}
Given a stably dissipative graph $G$, for every matrix $A\in\SD(G)$,
any stably dissipative Lotka-Volterra system  with matrix $A$ has
an invariant foliation of dimension $=\rank(G)$.
\end{coro}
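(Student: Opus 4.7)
The corollary is a direct consequence of Theorem~\ref{main} combined with Proposition~\ref{inv:foliation}. Given $A\in\SD(G)$, $A$ is dissipative (since stable dissipativity implies dissipativity), so Proposition~\ref{inv:foliation} applies and produces the $X_{A,q}$-invariant foliation $\F_A$ with $\dim\F_A=\rank(A)$. By Theorem~\ref{main}, every $A\in\SD(G)$ shares the same rank, and this common value is by definition $\rank(G)$. Combining,
\[
\dim\F_A=\rank(A)=\rank(G).
\]

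All the nontrivial content sits in Theorem~\ref{main}, whose proof I would approach as follows. For $A\in\SD(G)$, Proposition~\ref{sd:char} produces a positive diagonal $D$ so that $B:=AD$ is almost skew-symmetric; invertibility of $D$ gives $\rank(A)=\rank(B)$. The symmetric part $B^{\rm sym}$ vanishes off the $\Vscr_\blV$-block and is negative definite on it, so $\Null(B^{\rm sym})=\R^{\Vscr_\whV}:=\{x\in\R^n:x_i=0\text{ for all } i\in\Vscr_\blV\}$. By Lemma~\ref{kernel_equality} applied to $B$,
\[
\Null(B)=\Null(B^{\rm sym})\cap\Null(B^{\rm skew})=\R^{\Vscr_\whV}\cap\Null(B^{\rm skew}),
\]
so $\rank(A)=n-\dim\bigl(\R^{\Vscr_\whV}\cap\Null(B^{\rm skew})\bigr)$, and the problem is reduced to showing that the last dimension depends only on $G$.

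To establish the latter I would combine two observations. First, rank is locally constant on $\SD(G)$: the rank is lower semi-continuous, $\SD(G)$ is open (both stable dissipativity and $G_A=G$ are open conditions), so for $A'$ close enough to $A$ in $\SD(G)$ one has $\rank(A')\ge\rank(A)$, and exchanging $A$ and $A'$ gives the reverse inequality. Second, $\SD(G)$ is path-connected: given $A_0,A_1\in\SD(G)$, align the signs of the corresponding $B_i=A_iD_i$ on every white-touching edge via vertex sign-flips (diagonal $\pm1$-conjugations that preserve both the graph and the rank); the convex combination $B_t=(1-t)B_0+tB_1$ is then almost skew-symmetric with $B_t^{\rm sym}|_{\Vscr_\blV}$ negative definite and thus represents a path in $\SD(G)$, while for $\bbEdge$-edges the pair $(B_{ij},B_{ji})$ can additionally be deformed through $\R^2\setminus\{(0,0)\}$. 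Local constancy along the connecting path then gives the claim.

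The main obstacle is the path-connectedness step, specifically the treatment of $\bbEdge$-edges. Along such an edge the entries $B_{ij}$ and $B_{ji}$ are essentially free parameters constrained only by the global negative-definiteness of $B^{\rm sym}|_{\Vscr_\blV}$; a naive straight-line interpolation can force some $\bbEdge$-pair to pass through the origin, changing the graph, or may violate the negative-definiteness condition. I would handle this by a two-stage deformation: first interpolate the skew component of each $\bbEdge$-edge within $\R^2\setminus\{(0,0)\}$ while keeping the symmetric components fixed (which preserves negative definiteness automatically), and only then interpolate the symmetric components along a straight line inside the open convex cone of matrices whose $\Vscr_\blV$-symmetric part is negative definite.
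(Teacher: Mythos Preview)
Your one-paragraph derivation of the corollary from Theorem~\ref{main} and Proposition~\ref{inv:foliation} is correct and is exactly what the paper does: the corollary is stated immediately after the definition of $\rank(G)$ with no further argument.

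The substantive part of your proposal is the sketch for Theorem~\ref{main}, and there the local-constancy step is a genuine error. Lower semicontinuity of the rank gives, for each fixed $A$, a neighbourhood $U_A$ on which $\rank(\cdot)\ge\rank(A)$. Your ``exchanging $A$ and $A'$'' move would then need $A\in U_{A'}$, but the neighbourhood $U_{A'}$ depends on $A'$ and there is no reason it contains $A$. Lower semicontinuity alone never forces an integer-valued function to be locally constant on an open set (rank is certainly not locally constant on $\Mat_{n\times n}(\R)$). So even if the path-connectedness argument were completed, it would not yield the conclusion. Your reduction $\Null(B)=\R^{\Vscr_\whV}\cap\Null(B^{\rm skew})$ is correct and is a reasonable starting point, but from there one still has to show that the dimension of this intersection is determined by the combinatorics of $G$, and that is the actual work.

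The paper proceeds quite differently, by a combinatorial \emph{trimming} argument. Given a $\whV$-endpoint $i$ with unique neighbour $i'$, one deletes all edges incident with $i'$; on the matrix side this is realised by Gauss elimination using row $i$ and column $i$, so the rank is unchanged and the trimmed matrix again lies in $\SD$ of the trimmed graph (Lemma~\ref{trim:matrix}). Iterating until no $\whV$-endpoints remain, Lemma~\ref{no:endpoints} shows the resulting graph $G_m$ satisfies $\Red(G_m)=\{\blV\}$, and Corollary~\ref{black:reduction} then makes every matrix with graph $G_m$ nonsingular. Tracing back through the trims gives the common rank. This approach is constructive---it produces an explicit recipe for $\rank(G)$---whereas a topological connectedness argument, even if it could be repaired, would only establish constancy without computing the value.
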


\bigskip

\begin{defi}
We shall say that a graph $G$ has {\em constant rank} \, iff\,
every matrix $A\in\SD(G)$ has the same rank. 
\end{defi}
With this terminology,
theorem~\ref{main} just states that every stably dissipative graph has constant rank.

\bigskip
\bigskip

\section{ Simplified Reduction Algorithm }\label{Redheffer_Reduction_Algorithm}

As before, let  $\Gamma_{A,q}$ denote the forward limit set of~(\ref{LV}),
i.e., the set of all accumulation points of  $\phi_{A,q}^t(x)$ as $t\to+\infty$.
We   say that a species $i\in\{1,\ldots,n\}$ is  {\em strongly dissipative} \, iff \,
$\Gamma_{A,q}\subset\{\,x\in\R^n_+\,:\, x_i=q_i\, \}$, or equivalently  
$\lim_{t\to+\infty} \phi^t_{A,q,i}(x)=q_i$, for all $x\in\R^n_+$.
Similarly, we  say that a species $i\in\{1,\ldots,n\}$ is  {\em weakly dissipative} \, iff \,
$\lim_{t\to+\infty} \phi^t_{A,q,i}(x)$ exists, for all $x\in\R^n_+$.
With this terminology, the algorithm of Redheffer \textit{et al.},  described  in the introduction,
is about the determination of all `strongly' and `weakly dissipative' species of the stably dissipative  system~(\ref{LV}).
Since the algorithm runs on the graph $G_A$, the conclusions drawn from the reduction procedure hold for all stably dissipative systems
that share the same graph $G_A$.

The following proposition is a slight improvement on item (b) of  theorem~\ref{Theorem_1_Redheffer_Walter}.

\begin{prop}
If $\Rscr(A)$ has only $\blV$ and $\plus$-vertices 
then the system has an invariant foliation with a single globally attractive stationary point in each leaf.
\end{prop}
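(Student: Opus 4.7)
The plan is to combine three results already in the paper: Proposition \ref{inv:foliation}, which produces the invariant foliation $\F_A$; Theorem \ref{transversal_intersection}, which gives a unique equilibrium on each leaf; and item (b) of Theorem \ref{Theorem_1_Redheffer_Walter}, which supplies convergence of every orbit under the present hypothesis. The proposition then follows by matching, leaf by leaf, the limit of each orbit with the unique equilibrium that its leaf contains.

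In detail, I would first invoke Proposition \ref{inv:foliation}: since $A$ is stably dissipative, hence dissipative, the foliation $\F_A$ whose leaves are the level sets $g^{-1}(c)$ of $g(x)=W\log x$ is $X_{A,q}$-invariant and has dimension $\rank(A)$. Fixing an arbitrary leaf $L=g^{-1}(c)$, Theorem \ref{transversal_intersection} tells us that $L\cap E_{A,q}$ consists of a single point $p_L$, which is therefore the only stationary point of $X_{A,q}$ lying on $L$. For any $x\in L$, the forward orbit $\{\phi^t_{A,q}(x):t\geq 0\}$ stays inside $L$ by invariance of $\F_A$, and $L$ is closed in $\R^n_+$ because $g$ is continuous.

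The hypothesis on $\Rscr(A)$ is exactly what item (b) of Theorem \ref{Theorem_1_Redheffer_Walter} needs, so the limit $p(x):=\lim_{t\to+\infty}\phi^t_{A,q}(x)$ exists in $\R^n_+$. As the limit of an orbit, $p(x)$ is fixed by every time-$s$ map $\phi^s_{A,q}$, which forces $X_{A,q}(p(x))=0$ and hence $p(x)\in E_{A,q}$; as the limit of a sequence in the closed set $L$, also $p(x)\in L$. Therefore $p(x)\in L\cap E_{A,q}=\{p_L\}$, so every orbit starting in $L$ converges to $p_L$, which is the desired global attraction inside the leaf. The main obstacle is merely one of book-keeping rather than mathematics: one must check that the leaves are closed in $\R^n_+$ so that the Redheffer--Walter limit genuinely belongs to $L$ and not just to its closure, and observe that the limit of a forward orbit is automatically an equilibrium; beyond these elementary points, no new argument is required.
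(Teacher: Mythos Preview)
Your proof is correct and follows essentially the same route as the paper, which simply says ``Combine theorem~\ref{Theorem_1_Redheffer_Walter}(b) with proposition~\ref{transversal_intersection}.'' You have merely unpacked that one-liner: invoking Proposition~\ref{inv:foliation} for the invariant foliation, Theorem~\ref{transversal_intersection} for the unique equilibrium per leaf, and Theorem~\ref{Theorem_1_Redheffer_Walter}(b) for convergence, then observing that the limit is an equilibrium lying in the (closed) leaf---all exactly as intended.
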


\begin{proof}
Combine theorem \ref{Theorem_1_Redheffer_Walter}(b)  
with proposition~\ref{transversal_intersection}.
\end{proof}

\bigskip

In~\cite{RZ1981} Redheffer and Zhiming  make the following  remark:

\begin{rmk}\label{RZ:remark}
Let $A$ be dissipative and let every vertex $\circ$ in $G_A$ be replaced arbitrarily by $\oplus$. Then $A$ is nonsingular \, iff, \, by algebraic manipulations, every vertex can then be replaced by $\bullet$.
\end{rmk}

We shall explain this remark in terms of a simpler reduction algorithm.
Denote by $E_{A,q}$   the set of all equilibria of~(\ref{LV}), 
$E_{A,q}=\{\, x\in\R^n_+\,:\, A\,(x-q)=0\,\}$.
Let us  say that a species  $i\in\{1,\ldots,n\}$  is a {\em restriction to the equilibria} of $X_{A,q}$ \,
whenever \, $E_{A,q}\subset\{\,x\in\R^n_+\,:\, x_i=q_i\, \}$.  Notice that every strongly dissipative species is also
a restriction to the equilibria of $X_{A,q}$.
Think of colouring $i$ as black  as the statement that $i$ is a restriction to the equilibria of $X_{A,q}$.
Notice that at the begining of the reduction algorithm, described in the introduction,   the
weaker interpretation that all black vertices correspond to restrictions to the equilibria is also valid.
If we simply do not write $\plus$-vertices, but consider every $\whV$-vertex as a $\plus$-vertex,
then the reduction rules (b) and (c) can be discarded,
while the first rule, (a), becomes 
\begin{enumerate}
\item[(R)] If all neighbours of a vertex $j$ are $\blV$-vertices, except for one vertex $k$,
then we can color $k$ as a $\blV$-vertex.
\end{enumerate}

The idea implicit in the remark above is that (R) is a valid inference rule for the weaker interpretation
of the colouring statements above. Assuming that every $\whV$-vertex is a $\plus$-vertex amounts to
looking for restrictions on the equilibria set $E_{A,q}$ instead of the attractor $\Gamma_{A,q}$.
Let us still call {\em reduced graph}  to the graph, denoted by  $\Red(G)$, obtained from $G$ by successively applying rule (R) alone until
it can no longer be applied. The previous considerations show that

\begin{prop}\label{Redheffer:Equilibria}
Given a stably dissipative matrix $A$, every $\blV$-vertex of $\Red(G_A)$ is a restriction to the equilibria
of $X_{A,q}$.
\end{prop}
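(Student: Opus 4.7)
The plan is to induct on the step at which a vertex becomes black in the simplified reduction $\Red(G_A)$, showing at each step that $x\in E_{A,q}$ forces $x_i=q_i$ at every such vertex. Write $y=x-q$, so that $x\in E_{A,q}$ means $Ay=0$. A crucial preliminary point is that by Lemma~\ref{kernel_equality} we also have $A^{T}y=0$; this symmetry between row and column equations is what will let the argument survive the possible asymmetry $a_{ij}\neq 0$, $a_{ji}=0$ (or vice-versa) allowed by the definition of $G_A$.

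For the base of the induction (the vertices colored black at the start, i.e.\ those with $a_{ii}<0$), I would apply Lemma~\ref{lema2.1.ZL2010} with $w=y$. Picking the positive diagonal $D$ given by that lemma, one has
\[
\sum_{i,j}\frac{a_{ij}}{d_i}\,y_i\,y_j \;=\; y^{T}D^{-1}Ay \;=\; 0,
\]
because $Ay=0$. The lemma then yields $a_{ii}y_i=0$ for every $i$, and in particular $y_i=0$ whenever $a_{ii}<0$, i.e.\ at every initial $\blV$-vertex.

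For the inductive step, suppose rule~(R) is about to color a vertex $k$ because all neighbours of some vertex $j$ in $G_A$ are already $\blV$ except for $k$ itself. Read off the $j$-th entries of $Ay=0$ and of $A^{T}y=0$:
\[
a_{jj}y_j+\sum_{l\neq j}a_{jl}y_l=0,\qquad a_{jj}y_j+\sum_{l\neq j}a_{lj}y_l=0.
\]
The diagonal term vanishes in either case: if $j$ is white then $a_{jj}=0$, and if $j$ is black then $y_j=0$ by the inductive hypothesis. In both sums only indices $l$ that are neighbours of $j$ in $G_A$ contribute; among these, every $l\neq k$ is black and hence satisfies $y_l=0$ by induction. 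So the two identities collapse to $a_{jk}\,y_k=0$ and $a_{kj}\,y_k=0$ respectively. Since $k$ is a neighbour of $j$, at least one of $a_{jk},a_{kj}$ is non-zero, and we conclude $y_k=0$.

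The induction therefore shows $y_i=0$ at every $\blV$-vertex of $\Red(G_A)$, i.e.\ $x_i=q_i$ there, which is the desired restriction on $E_{A,q}$. I expect the only subtle point to be the use of $A^{T}y=0$ alongside $Ay=0$: without invoking Lemma~\ref{kernel_equality}, rule~(R) would not be sound whenever the one edge joining $k$ to $j$ happens to be nonzero in only one direction, and the whole inductive step would break down on those edges.
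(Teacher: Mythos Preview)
Your proof is correct and follows the same inductive strategy that the paper sketches in the discussion immediately preceding the proposition: verify that the initially black vertices satisfy $y_i=0$ on $E_{A,q}$ via Lemma~\ref{lema2.1.ZL2010}, and then check that rule~(R) propagates this property. Your explicit appeal to Lemma~\ref{kernel_equality} to cover the possible asymmetry $a_{jk}=0$, $a_{kj}\neq 0$ is a detail the paper glosses over; note, however, that by Proposition~\ref{diss:char} such asymmetry can only occur when both $a_{jj}<0$ and $a_{kk}<0$, so the target vertex $k$ would already be black at the outset and the issue never actually arises in the induction---either route closes the gap.
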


\bigskip

We shall write $\Red(G)=\{\blV\}$ to express that all vertices
of $\Red(G)$ are $\blV$-vertices.

\bigskip

\bigskip

\begin{coro}\label{black:reduction}
If  $G$ is a stably dissipative graph 
such that  $\Red(G)=\{\blV\}$ then every matrix $A\in\SD(G)$ is nonsingular.
In particular $G$ has constant rank.
\end{coro}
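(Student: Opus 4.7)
The plan is to deduce the corollary directly from Proposition~\ref{Redheffer:Equilibria}, reducing the statement to a linear-algebraic observation about the equilibria set $E_{A,q}$. The hypothesis $\Red(G)=\{\blV\}$ is about the graph $G$ alone, but since $G_A=G$ for every $A\in\SD(G)$, the simplified reduction that uses only rule~(R) produces the same reduced graph $\Red(G_A)=\Red(G)$, regardless of the particular $A$ chosen; this is the only ``graph-theoretic'' input I will need.

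First I fix an arbitrary matrix $A\in\SD(G)$ and pick any $q\in\R^n_+$ (e.g.\ $q=(1,\dots,1)$), which is automatically a stationary point of $X_{A,q}$. By Proposition~\ref{Redheffer:Equilibria} applied to the (stably dissipative) matrix $A$, every $\blV$-vertex of $\Red(G_A)$ is a restriction to the equilibria of $X_{A,q}$; since by assumption every vertex of $\Red(G)=\Red(G_A)$ is $\blV$, this gives
\[
E_{A,q}\subseteq \bigcap_{i=1}^{n}\{\,x\in\R^n_+\,:\, x_i=q_i\,\}=\{q\}\;.
\]
In particular $E_{A,q}=\{q\}$.

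Next I translate this into a statement about $\Null(A)$. By the definition~(\ref{equilibria}), $E_{A,q}=(q+\Null(A))\cap\R^n_+$. Since $q$ lies in the open positive orthant $\R^n_+$, for every $v\in\Null(A)$ the point $q+tv$ belongs to $\R^n_+$ for all sufficiently small $|t|$, and satisfies $A(q+tv-q)=tAv=0$, hence $q+tv\in E_{A,q}$. The equality $E_{A,q}=\{q\}$ therefore forces $v=0$, so $\Null(A)=\{0\}$, i.e.\ $A$ is nonsingular and $\rank(A)=n$.

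As $A\in\SD(G)$ was arbitrary, every matrix in $\SD(G)$ has rank $n$, so $G$ has constant rank (equal to $n$). I do not anticipate a serious obstacle: the only non-trivial ingredient is Proposition~\ref{Redheffer:Equilibria}, and the rest is the elementary observation that a non-trivial kernel would produce a whole line of equilibria through the interior point $q$, contradicting $E_{A,q}=\{q\}$.
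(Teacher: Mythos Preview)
Your proof is correct and follows precisely the paper's approach: apply Proposition~\ref{Redheffer:Equilibria} to conclude $E_{A,q}=\{q\}$, and then deduce that $A$ is nonsingular. The paper states the last implication as ``automatic'', whereas you spell it out via the interior-point argument that a nonzero $v\in\Null(A)$ would yield a segment $q+tv$ of equilibria through $q$; this is the natural justification and matches the paper's intent.
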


\begin{proof}
Given $A\in\SD(G)$, by Proposition~\ref{Redheffer:Equilibria} we have
$E_{A,q}=\{q\}$, which automatically implies that $A$ is nonsingular.
\end{proof}

\bigskip

In fact, the converse statement of this corollary holds by remark~\ref{RZ:remark}.
 
\begin{prop} 
Let $A$ be stably dissipative matrix. If $A$ is nonsingular then $\Red(G_A)=\{\blV\}$.
\end{prop}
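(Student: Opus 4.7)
The plan is to prove the contrapositive: if $\Red(G_A)$ still contains a $\whV$-vertex, then $A$ must be singular. Let $V$ denote the set of $\whV$-vertices of $\Red(G_A)$, so $\emptyset\neq V\subseteq\Vscr_\whV$; the goal is to exhibit a nonzero $v\in\R^n$ supported on $V$ with $Av=0$.

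The first structural step is to show that $V$ is an \emph{independent set} of $G_A$. Because $V\subseteq\Vscr_\whV$, no edge of the induced subgraph $G_A|_V$ is a strong link $\bbEdge$; since by Lemma~\ref{teorema1RZ1981} every cycle of the stably dissipative graph $G_A$ contains a strong link, $G_A|_V$ is acyclic and hence a forest. The reduction being stuck means that rule~(R) is inapplicable to every vertex $j$, i.e., every $j$ has $0$ or at least $2$ neighbours in $V$. Applied to $j\in V$ this forbids leaves of the forest $G_A|_V$, so every component of that forest reduces to a single isolated vertex---$G_A|_V$ has no edges at all.

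Next, define $U=\{j\in V^c:|N(j)\cap V|\geq 2\}$ and let $H$ be the bipartite graph on $V\cup U$ whose edges are those edges of $G_A$ joining $V$ to $U$. Any cycle of $H$ would be a cycle of $G_A$ alternately visiting $V\subseteq\Vscr_\whV$, and so could contain no $\bbEdge$, contradicting stable dissipativity. Hence $H$ is a forest, so $|E(H)|<|V|+|U|$; on the other hand, the defining property of $U$ gives $|E(H)|\geq 2|U|$. Combining yields
\[ 2|U|\leq |E(H)|<|V|+|U|, \]
that is, $|U|<|V|$.

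The proof then closes by linear algebra. For any $v$ supported on $V$, the coordinate $(Av)_j=\sum_{k\in V}a_{jk}v_k$ vanishes automatically when $j\in V$ (because $a_{jj}=0$ for $j\in\Vscr_\whV$ and the independence of $V$ kills the remaining terms $a_{jk}v_k$ with $k\in V\setminus\{j\}$) and when $j\in V^c\setminus U$ (because $j$ has no neighbour in $V$). The only genuine constraints are the $|U|$ equations $\sum_{k\in N(j)\cap V}a_{jk}v_k=0$ for $j\in U$, in $|V|>|U|$ unknowns, which therefore admit a nonzero solution; that solution is the desired nonzero element of $\Null(A)$. The step I expect to be most delicate is the twin forest argument of the second and third paragraphs: one has to recognise simultaneously that the ``strong link in every cycle'' characterisation of stably dissipative graphs blocks cycles in $G_A|_V$ and in $H$, and that the exhaustion of rule~(R) blocks the corresponding leaves.
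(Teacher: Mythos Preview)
Your argument is correct. The paper itself does not supply a proof of this proposition: it simply appeals to Remark~\ref{RZ:remark} of Redheffer--Zhiming, so your write-up is in fact more complete than what the paper offers.

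A couple of minor points worth tightening. First, when you assert that $j\in V^c\setminus U$ has \emph{no} neighbour in $V$, you are implicitly re-using the fact (already stated two paragraphs earlier) that the inapplicability of rule~(R) forbids any vertex from having exactly one neighbour in $V$; it is worth recalling this explicitly at that spot. Second, in the forest argument for $H$ you should note that $V\subseteq\Vscr_\whV$ refers to the \emph{original} colouring of $G_A$ (reduction only turns $\whV$ into $\blV$), so every edge of a putative cycle in $H$ is incident to a $\whV$-vertex of $G_A$ and hence cannot be a strong link of $G_A$. With these clarifications the chain
\[
2|U|\;\le\;|E(H)|\;\le\;|V|+|U|-1
\]
gives $|U|<|V|$, and the under-determined linear system $\{\,\sum_{k\in V}a_{jk}v_k=0:j\in U\,\}$ yields the desired nonzero $v\in\Null(A)$ supported on $V$.

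Your route---extracting a bipartite forest between the residual white set $V$ and its ``double-neighbour'' set $U$, then using the edge count to force $|U|<|V|$---is a clean combinatorial substitute for the ``algebraic manipulations'' alluded to in Remark~\ref{RZ:remark}. It has the advantage of being entirely self-contained within the paper's framework (Lemma~\ref{teorema1RZ1981} and the definition of rule~(R)), whereas the paper outsources the statement to the Redheffer--Zhiming reference.
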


\bigskip
\bigskip

\section{ Proofs }

We call any extreme $\whV$-vertex of $G$ a {\em  $\whV$-endpoint} of $G$.

\begin{lema} \label{no:endpoints}
If  a stably dissipative graph $G$
  has no $\whV$-endpoints then $\Red(G)=\{\blV\}$.
\end{lema}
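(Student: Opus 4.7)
The plan is to induct on the number of $\whV$-vertices of $G$, using rule (R) to peel off one white vertex at a time. The base case (no white vertices) is vacuous, and recolouring a white vertex as $\blV$ leaves all edges unchanged, so the resulting graph is again stably dissipative and has no new $\whV$-endpoints, meaning the inductive hypothesis will apply. Thus the whole substance of the proof lies in producing, whenever some white vertex remains, a vertex $j$ to which rule (R) applies.

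The key auxiliary object will be the spanning subgraph $\tilde{G}$ obtained by deleting every strong link $\bbEdge$ from $G$. Because every cycle of $G$ contains a strong link, $\tilde{G}$ is acyclic, hence a forest. Moreover, a strong link joins only $\blV$-vertices, so every edge of $G$ incident to a white vertex survives in $\tilde{G}$, and each $\whV$-vertex has the same degree in $G$ and in $\tilde{G}$. The ``no $\whV$-endpoint'' hypothesis therefore amounts to: every $\whV$-vertex has degree $\geq 2$ in the forest $\tilde{G}$.

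To find $j$, I will pick a connected component $T$ of $\tilde{G}$ containing some $\whV$-vertex, root $T$ at an arbitrary vertex, and choose $w$ to be a $\whV$-vertex of $T$ of maximum depth, so that no descendant of $w$ in $T$ is white. Since $w$ has degree $\geq 2$ in $T$, it must have at least one child $c$, and $c$ is $\blV$ by the maximality of the depth of $w$. I will then verify that rule (R) applies to $j := c$ with exceptional vertex $k := w$: the $G$-neighbours of $c$ split into the parent $w$ (white), the children of $c$ in $T$ (all descendants of $w$, hence $\blV$), and any strong-link partners of $c$ in $G$ (which are $\blV$ by definition of a strong link). So $w$ is the unique non-$\blV$ neighbour of $c$, and rule (R) recolours $w$ black, reducing the number of $\whV$-vertices by one.

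The step I expect to be the real obstacle is precisely this existence argument. Nothing local forces some vertex to have at most one white neighbour; both the forest structure of $\tilde{G}$ and the degree lower bound at white vertices are essential, and the ``deepest white vertex of a rooted component'' trick is what combines the two pieces of information. Once this step is in hand, the induction itself is essentially bookkeeping.
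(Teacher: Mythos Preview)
Your proof is correct, but it proceeds quite differently from the paper's. The paper argues by contradiction on the \emph{output} $\Red(G)$: assuming some $\whV$-vertex survives, it builds a walk $j_0,j_1,\ldots$ in $\Red(G)$ that never traverses a strong link (at each step one moves to a $\whV$-neighbour if possible, and any $\blV$-vertex reached from a $\whV$-vertex must have a second $\whV$-neighbour, else rule~(R) would still apply), and by finiteness this walk closes up into a cycle with no $\bbEdge$ edge, contradicting stable dissipativity. You instead work directly on $G$ and argue constructively by induction: you pass to the forest $\tilde G$ obtained by deleting all strong links, root a component containing a white vertex, and use a deepest white vertex $w$ to exhibit a black child $c$ whose only non-black $G$-neighbour is $w$, so rule~(R) fires. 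Both arguments ultimately exploit the same structural fact---that deleting strong links leaves a forest---but the paper uses it implicitly to derive a contradiction, while you use it explicitly to produce a reduction step. Your approach has the advantage of being constructive (it specifies an order in which rule~(R) can be applied to exhaust all $\whV$-vertices), at the cost of a slightly heavier setup with rooted trees and depth; the paper's walk argument is terser but non-constructive.
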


\begin{proof}
Let $G$ be a stably dissipative graph with no $\whV$-endpoints.
Assume, by contradiction, that $\Red(G)\neq \{\blV\}$.
We shall construct a cycle in $\Red(G)$ with no $\bbEdge$ edges.
Since every $\whV$-vertex of $\Red(G)$ is also a $\whV$-vertex of $G$,
this will contradict the
assumption that $G$ is stably dissipative.
In the following construction we always refer to the vertex coloring of $\Red(G)$.
Take $j_0$ to be any $\whV$-vertex.  
Then, given $j_k$  take a neighbouring vertex $j_{k+1}$ to be another $\whV$-vertex,
if possible, or a $\blV$-vertex otherwise.
While the path is simple (no vertex repetitions) it can not end at some $\whV$-endpoint, and it can not contain
any $\bbEdge$ edge  because whenever we arrive to a $\blV$-vertex from a $\whV$-one we can always escape to 
another $\whV$-vertex. In fact, no $\blV$-vertex can be linked to a single $\whV$-vertex since otherwise
we could reduce it to a $\blV$-vertex by applying rule (R).
By finiteness this recursively defined path must eventually close, hence producing a cycle
with no $\bbEdge$ edges.
\end{proof}

\bigskip

Given a stably dissipative graph $G$ and some $\whV$-endpoint $i\in\Vscr_{\whV}$,
we define the {\em trimmed} graph $T_i(G)$ as follows: Let $i'\in\Vscr$ be the unique vertex
connected to $i$ by some edge of $G$. Then $T_i(G)$ is the partial graph obtained from $G$
by removing every edge incident with $i'$. The trimming operation
preserves the stable dissipativeness of graph, i.e.,
$T_i(G)$ is stably dissipative whenever $G$ is.  This follows by
definition~\ref{stably:dissipative:graph} because $T_i(G)$ is obtained by removing some edges from $G$.

\bigskip

\begin{figure}[h]
\centering{\includegraphics[width=9cm]{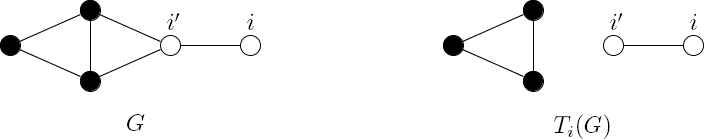}}
\caption{\footnotesize{A graph $G$ and it's trimmed graph $T_i(G)$.}}
\end{figure}

\bigskip

Similarly we define the {\em trimmed matrix} $T_i(A)$ as follows:
Annihilate every entry of row $i'$, except for $a_{i' i}$ and $a_{i' i'}$,
and annihilate every entry of column $i'$, except for $a_{ii'}$ and $a_{i' i'}$.
$$
A=\left[
\begin{array}{ccccc}
\cdot & 0 & \cdot & \ast & \cdot\\
0 & 0 & 0 & a_{ii'} & \cdot\\
\cdot & 0 & \cdot & \ast & \cdot\\
\ast & a_{i'i} & \ast & a_{i'i'} & \ast\\
\cdot & 0 & \cdot & \ast & \cdot\\
\end{array}
\right]
\qquad T_i(A)=
\left[
\begin{array}{ccccc}
\cdot & 0 & \cdot & 0 & \cdot\\
0 & 0 & 0 & a_{ii'} & \cdot\\
\cdot & 0 & \cdot & 0 & \cdot\\
0 & a_{i'i} & 0 & a_{i'i'} & 0\\
\cdot & 0 & \cdot & 0 & \cdot\\
\end{array}
\right]$$
The `$\ast$' above represent entries of $A$ that are annihilated in $T_i(A)$.

\bigskip

\begin{lema} \label{trim:matrix}
Let $i\in\Vscr_{\whV}$ be some $\whV$-endpoint of a stably dissipative graph $G$.\\
If  $A\in\SD(G)$  \, then \, $T_i(A)\in\SD(T_i(G))$\,  and \,
$\rank(T_i(A))= \rank(A)$.
\end{lema}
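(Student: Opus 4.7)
The plan is to address the two claims in turn. First I would verify $T_i(A)\in\SD(T_i(G))$ via Proposition~\ref{sd:char}, and then compute $\rank(T_i(A))=\rank(A)$ by elementary row and column operations. A key preliminary observation is that, because $a_{ii}=0$ (as $i\in\Vscr_\whV$) and the edge $(i,i')$ is present in $G$, the almost skew-symmetry of $AD$ (for some $D>0$ realizing the stable dissipativeness of $A$ via Proposition~\ref{sd:char}) gives $a_{ii'}\,d_{i'}=-a_{i'i}\,d_i$, forcing both $a_{ii'}$ and $a_{i'i}$ to be nonzero.

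For the first claim, I would note from the definitions that $G_{T_i(A)}=T_i(G)$, and that $T_i(G)$ inherits stable dissipativeness from $G$ since any cycle of $T_i(G)$ is already a cycle of $G$. Using the same $D$, almost skew-symmetry of $T_i(A)D$ holds automatically at untrimmed positions and is trivial at each trimmed position $(i',r)$ with $r\notin\{i,i'\}$, because both $T_i(A)$ entries vanish there. The $\Vscr_\blV$-block quadratic form of $T_i(A)D$ is unchanged if $i'\in\Vscr_\whV$; if $i'\in\Vscr_\blV$, every off-diagonal $i'$-term in it now vanishes (the only neighbor of $i'$ that could contribute a non-trimmed entry is $i$, which is white), so the form decomposes as a direct sum of the old form restricted to $\Vscr_\blV\setminus\{i'\}$ and the strictly negative one-dimensional form $a_{i'i'}\,d_{i'}\,x_{i'}^2$. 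Both pieces are negative definite, hence so is the sum, and Proposition~\ref{sd:char} yields $T_i(A)\in\SD(T_i(G))$.

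For the rank claim, reordering indices to put $i,i'$ first gives
\[
A=\begin{pmatrix} 0 & a_{ii'} & 0 \\ a_{i'i} & a_{i'i'} & R \\ 0 & C & F \end{pmatrix},\qquad T_i(A)=\begin{pmatrix} 0 & a_{ii'} & 0 \\ a_{i'i} & a_{i'i'} & 0 \\ 0 & 0 & F \end{pmatrix},
\]
where the zero blocks in row/column $i$ reflect that $i$ has no neighbor other than $i'$. Since $a_{ii'}\neq 0$ and row $i$ has zeros outside column $i'$, pivoting on $a_{ii'}$ clears the column $C$ without altering any other column; symmetrically, column $i$ with pivot $a_{i'i}\neq 0$ clears the row $R$. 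These invertible operations reduce $A$ to $T_i(A)$, giving $\rank(A)=\rank(T_i(A))=2+\rank(F)$. The point requiring the most care is the almost-skew-symmetry and definiteness verification when $i'\in\Vscr_\blV$; once that splitting argument is in hand, the rank half is a routine block-matrix computation, provided the non-vanishing of $a_{ii'}$ and $a_{i'i}$ is secured at the outset.
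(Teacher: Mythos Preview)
Your proposal is correct and follows essentially the same route as the paper: both use Proposition~\ref{sd:char} with the same diagonal matrix $D$ to verify $T_i(A)\in\SD(T_i(G))$, handle the negative-definiteness case split on the color of $i'$ via the same decoupling of the $x_{i'}$ variable, and establish the rank equality by Gauss elimination pivoting on the nonzero entries $a_{ii'}$ and $a_{i'i}$. The only differences are cosmetic: you reverse the order of the two claims, phrase the elimination in block-matrix form rather than as explicit row/column rules, and describe the definiteness argument as a direct-sum splitting where the paper instead evaluates the original form $Q$ at the auxiliary vector $(x_\ell')$ with $x'_{i'}=0$.
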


\begin{proof} Take $A\in \SD(G)$ and let $A'=T_i(A)$, where $i$ is some $\whV$-endpoint.
Denote, respectively, by ${\rm col}_j$ and ${\rm row}_j$  the $j^{\rm th}$ column and the $j^{\rm th}$ row of $A$,
and denote by  ${\rm col}_j'$ and ${\rm row}_j'$ the $j^{\rm th}$ column and the $j^{\rm th}$  row of the trimmed matrix.
Since $i$ is a $\whV$-endpoint, $a_{ii'}$ is the only nonzero entry in ${\rm row}_i$,
and $a_{i,i}$ is the only nonzero entry in ${\rm col}_i$.
Then the trimmed matrix $A'$ is obtained from $A$ by applying the following
Gauss elimination rules, either simultaneously  or in some arbitrary order
\begin{align*}
{\rm row}_j'&:= {\rm row}_j - \frac{a_{ji'}}{a_{i,i'}}\, {\rm row}_i\qquad j\neq i' \;,\\
{\rm col}_j'&:= {\rm col}_j - \frac{a_{i'j}}{a_{i',i}}\, {\rm col}_i\quad\qquad j\neq i'\;.
\end{align*}
Because Gauss elimination preserves the matrix rank we have $\rank(A')= \rank(A)$.
To finish the proof, it is  enough to see now that $A'$ is stably dissipative.
We use proposition~\ref{sd:char} for this purpose.
First, $G_{A'}=T_i(G)$ is stably dissipative as observed above.
Let $D$ be a positive diagonal matrix such that $A D$ is almost skew-symmetric. 
In view of  proposition~\ref{sd:char},  we only need to prove that $A'\,D$ is also almost skew-symmetric.
Notice that $G$ and $T_i(G)$ share the same black  and white vertices.
If $a'_{kk}=0$ or $a'_{jj}=0$ then also $a_{kk}=0$ or $a_{jj}=0$.
Hence, because $A D$ is almost skew-symmetric,
$a_{kj}\,d_j=-a_{jk}\,d_k$. Looking at the Gauss elimination rules  above,
we have $a_{kj}'=a_{kj}$ and $a_{jk}'=a_{jk}$, or else
 $a_{kj}'= a_{jk}'=0$. In either case we have $a_{kj}'\,d_j=-a_{jk}'\,d_k$.
Finally, we need to see that 
$Q'(x_\ell)_{\ell\in\Vscr_\blV} = \sum_{k,j\in\Vscr_\blV} a_{kj}' d_j x_k x_j$
is a  negative definite  quadratic form.
If $i'$ is a $\whV$-vertex then $Q'(x_\ell)_{\ell\in\Vscr_\blV} = \sum_{k,j\in\Vscr_\blV} a_{kj} d_j \, x_k x_j$
is negative definite because $A D$ is almost skew-symmetric.
Otherwise, if $i'$ is a $\blV$-vertex, given a nonzero vector $(x_\ell)_{\ell\in\Vscr_\blV}$
we define $(x_\ell')_{\ell\in\Vscr_\blV}$ letting $x_\ell'=x_\ell$ for $\ell\neq i'$,
while $x_{i'}'=0$.  Then
$$ Q'(x_\ell)_{\ell\in\Vscr_\blV} =  \underbrace{a_{i'i'}}_{<0} d_{i'} x_{i'}^2 + 
\underbrace{ \sum_{k,j\in\Vscr_\blV} a_{kj} d_j x_k' x_j'}_{=Q(x_\ell')_{\ell\in\Vscr_\blV}\leq 0}  <0\;,$$
since $(x_\ell)_{\ell\in\Vscr_\blV}\neq 0$ implies that 
either $x_{i'}\neq 0$ or else $(x_\ell')_{\ell\in\Vscr_\blV}\neq 0$.
This proves that $Q'$ is negative definite.
\end{proof}

\bigskip

As a simple corollary of the previous lemma we obtain

\begin{lema}[Trimming lemma]\label{trimming}
Let $i\in\Vscr_{\whV}$ be some $\whV$-endpoint of a stably dissipative graph $G$.
If $T_i(G)$ has constant rank then so has $G$, and
$\rank(G)=\rank(T_i(G))$.
\end{lema}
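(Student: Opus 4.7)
The plan is to deduce this lemma as a direct corollary of Lemma~\ref{trim:matrix} together with the constant-rank hypothesis on $T_i(G)$. The previous lemma already does all the technical work by showing that trimming sends $\SD(G)$ into $\SD(T_i(G))$ while preserving rank; what remains is simply to chain these two facts with the definition of constant rank.

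Concretely, I would begin by picking an arbitrary matrix $A\in\SD(G)$. Applying Lemma~\ref{trim:matrix} to $A$ at the $\whV$-endpoint $i$, the trimmed matrix $T_i(A)$ belongs to $\SD(T_i(G))$ and satisfies $\rank(T_i(A))=\rank(A)$. Now the hypothesis that $T_i(G)$ has constant rank means, by the relevant definition, that every matrix in $\SD(T_i(G))$ has rank equal to $\rank(T_i(G))$. In particular, this applies to $T_i(A)$, giving $\rank(T_i(A))=\rank(T_i(G))$, and therefore $\rank(A)=\rank(T_i(G))$.

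Since $A\in\SD(G)$ was arbitrary, every matrix in $\SD(G)$ shares the common rank $\rank(T_i(G))$. By definition, this says $G$ has constant rank and $\rank(G)=\rank(T_i(G))$, which is exactly the conclusion.

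There is no real obstacle here: the substantive content (namely that trimming at a $\whV$-endpoint both preserves stable dissipativity and preserves rank) has already been established in Lemma~\ref{trim:matrix}. The trimming lemma is essentially a packaging step, letting us transport the constant-rank property from $T_i(G)$ back to $G$ and read off the value of the rank along the way. The only point worth flagging is that we are using the map $A\mapsto T_i(A)$ as a one-sided bridge: we do not need it to be surjective onto $\SD(T_i(G))$, only to land there, because the constant-rank hypothesis already controls the rank of every element of $\SD(T_i(G))$ uniformly.
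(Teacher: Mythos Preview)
Your proposal is correct and matches the paper's approach: the paper explicitly presents this lemma as ``a simple corollary of the previous lemma'' (Lemma~\ref{trim:matrix}) without further proof, and your argument spells out exactly that corollary by applying $A\mapsto T_i(A)$ and the constant-rank hypothesis on $T_i(G)$.
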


\bigskip

We can now prove theorem~\ref{main}.
\begin{proof} 
Define recursively a sequence of graphs
$G_0, G_1,\ldots, G_m$, with $G_0=G$, and where $G_{i+1}=T_{j_i}(G_i)$ for some
$\whV$-endpoint $j_i$ of $G_i$. This sequence will end at some graph $G_m$ with no $\whV$-endpoint.
By Lemma~\ref{no:endpoints}  we have $\Red(G_m)=\{\blV\}$.
The connected components of $G_m$ are either reducible to $\blV$-vertices by iteration of rule (R),
or else composed by $\whV$-vertices alone.
Since the $\whV$-components can not be trimmed anymore, they must be either formed of a single
$\whV$-vertex, or else a single $\wwEdge$ edge.
By corollary~\ref{black:reduction}, $G_m$ has constant rank. 
Finally, applying inductively Lemma~\ref{trimming}
we see that all graphs $G_i$ have constant rank. Hence $G$, in particular, has
constant rank.
\end{proof}

\bigskip

\begin{rmk}
The previous proof gives a simple recipe to compute the rank of a graph.
Trim $G$ while possible. In the end, discard the single $\whV$-vertex components
and count the remaining vertices.
\end{rmk}


\begin{center}
\begin{table}[h]
\caption{\footnotesize{Some graph trimming examples.}}
\begin{tabular}[c]{ccccc}
Original graph & & Trimmed graph & & Graph rank \\
\hline \\
\vspace{2mm}
\includegraphics[width=2cm]{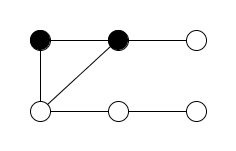}
& &
\includegraphics[width=2cm]{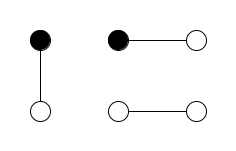}
& & 
6 \\
\vspace{2mm}
\includegraphics[width=2cm]{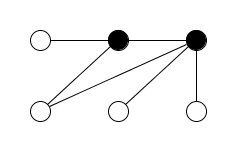}
& &
\includegraphics[width=2cm]{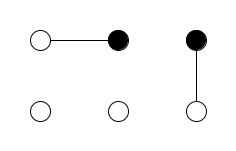}
& & 
4 \\
\vspace{2mm}
\includegraphics[width=2cm]{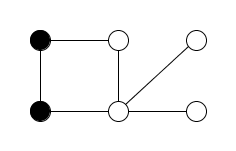}
& &
\includegraphics[width=2cm]{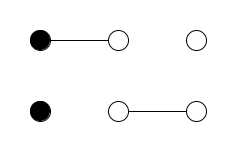}
& & 
5 \\
\hline \\
\end{tabular}
\end{table}
\end{center}

\bigskip
\bigskip

\section{Trimming Effect on Dynamics}

In this last section we use an example to describe the effect of trimming a stably dissipative
matrix on the underlying dynamics.

Consider the system
\begin{displaymath}
E=\left \{ \begin{array}{ll}
								\dot{x}_1=x_1((x_2-1)+(x_7-1)) \\
								\dot{x}_2=x_2(-2(x_1-1)+(x_3-1)) \\
								\dot{x}_3=-x_3(x_2-1) \\
								\dot{x}_4=x_4((x_5-1)-(x_7-1)) \\
								\dot{x}_5=x_5(-2(x_4-1)+(x_6-1)) \\
								\dot{x}_6=-x_6(x_5-1) \\
								\dot{x}_7=x_7(-(x_1-1)+(x_4-1)-(x_7-1)),
								\end{array} \right.
\end{displaymath}
with interaction matrix
\begin{displaymath}
A={\left[\begin{array}{ccc|ccc|c}
0 & 1 & 0 & 0 & 0 & 0 & 1 \\
-2 & 0 & 1 & 0 & 0 & 0 & 0 \\
0 & -1 & 0 & 0 & 0 & 0 & 0 \\
\hline
0 & 0 & 0 & 0 & 1 & 0 & -1 \\
0 & 0 & 0 & -2 & 0 & 1 & 0 \\
0 & 0 & 0 & 0 & -1 & 0 & 0 \\
\hline
-1 & 0 & 0 & 1 & 0 & 0 & -1
\end{array}\right]}
\end{displaymath}
and fixed point $q=(1,1,1,1,1,1,1,)$. $E$ has associated graph $G_A$ represented in figure \ref{Graph_6_1}.\\

\begin{figure}[h]
\centering{\includegraphics[width=6cm]{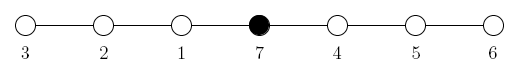}}
\caption{\footnotesize{Associated graph of matrix $A$, $G(A)$.}\label{Graph_6_1}}
\end{figure}


\begin{figure}[h]
\centering{\includegraphics[width=6cm]{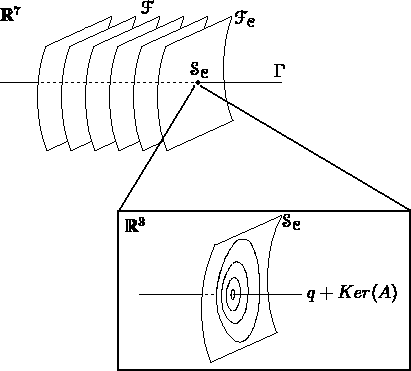}}
\caption{\footnotesize{Phase portrait of a system $E$.}\label{Invariant_foliation}}
\end{figure}

The null space, $\Null(A)$, is generated by the vector $(1,0,2,1,0,2,0)$. Hence the foliation $\F$, with leaves $\F_c$ given by
\begin{displaymath}
\F_c=\{x\in\R^7 \, :\, \log x_1+2\log x_3+\log x_4+2\log x_6=c \}\,,
\end{displaymath}
is an invariant foliation with dimension $6$ in $\R^7$. The system's phase portrait is represented in figure \ref{Invariant_foliation}, being the atractor a $3$-plan transversal to $\F$ given by
\begin{displaymath}
\Gamma=\{x\in\R^7 \, :\, x_1=x_4,\,x_2=x_5,\,x_3=x_6,\,x_7=1 \}\,.
\end{displaymath}

The intersection of each leaf $\F_c$ with $\Gamma$ is a surface $\Scal_c$ given by
\begin{displaymath}
\Scal_c=\F_c\cap\Gamma=\{(x_1,x_2,x_3,x_1,x_2,x_3,1) \, :\, \log x_1+2\log x_3=\frac{c}{2} \},
\end{displaymath}
wich is foliated into invariant curves by the level sets of $h$, defined in (\ref{Liapunov_function}). Note that $\Scal_c$ corresponds to an invariant leaf of the conservative system with graph $\circ \put(0.5,2.5){\line(1,0){10}} \quad \circ \put(0.5,2.5){\line(1,0){10}} \quad \circ$.\\ 

With the first trim on $G$ we get the graph $T_6(G)$ represented in figure \ref{Graph_6_1_trim_1}.

\begin{figure}[h]
\centering{\includegraphics[width=6cm]{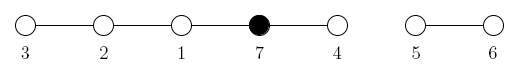}}
\caption{\footnotesize{The trimmed graph of $G$, $T_6(G)$.}\label{Graph_6_1_trim_1}}
\end{figure}

This corresponds to annihilate the entries $(4,5)$ and $(5,4)$ of the original matrix $A$. Notice that the components $x_5$ and $x_6$ of the system are independent of the rest. Hence the dynamics of this new system is the product of two independent LV systems represented in figure~\ref{Trim_1_dynamic}.

\begin{figure}[h]
\centering{\includegraphics[width=7cm]{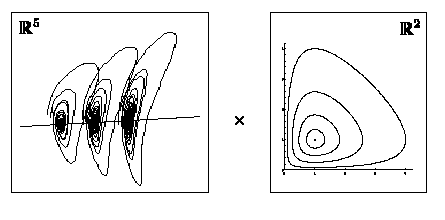}}
\caption{\footnotesize{Representation of the dynamic of the system $E_1$.}\label{Trim_1_dynamic}}
\end{figure}

The five dimensional system on the left of figure~\ref{Trim_1_dynamic} has a straight line of equilibria. Moreover it leaves invariant a foliation of dimension four with a single globally attractive fixed point on each leaf. The right-hand side system is a typical conservative predator-prey.

Now we have two different possibilities of trimming the graph $T_6(G)$: we can either choose the $\whV$-endpoint $3$ or else $4$. In the first case we get the graph $T_3(T_6(G))$ represented in figure \ref{Graph_6_1_trim_2_1}, whose dynamics is illustrated in figure~\ref{Trim_2_1_dynamic}. 

\begin{figure}[h]
\centering{\includegraphics[width=6cm]{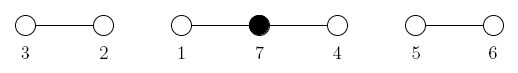}}
\caption{\footnotesize{The trimmed graph $T_3(T_6(G))$ of $T_6(G)$.}\label{Graph_6_1_trim_2_1}}
\end{figure}

\begin{figure}[h]
\centering{\includegraphics[width=9cm]{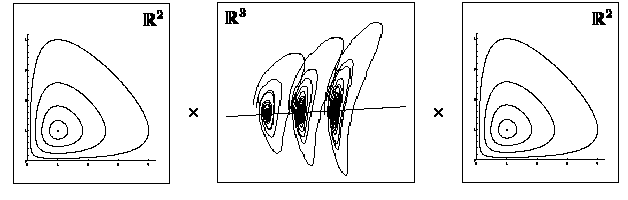}}
\caption{\footnotesize{Representation of the system's dynamics associated to the graph $T_3(T_6(G))$.}\label{Trim_2_1_dynamic}}
\end{figure}

The three dimensional system in the middle of figure~\ref{Trim_2_1_dynamic} has a straight line of equilibria. Moreover it leaves invariant a foliation of dimension two with a single globally attractive fixed point on each leaf. The left and right-hand side systems are typical conservative  predator-preys.\\

In the second case we get the graph $T_4(T_6(G))$ represented in figure \ref{Graph_6_1_trim_2_2}, whose dynamics is depicted in figure~\ref{Trim_2_2_dynamic}.

\begin{figure}[h]
\centering{\includegraphics[width=6cm]{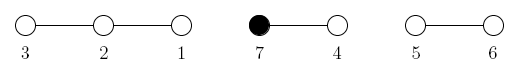}}
\caption{\footnotesize{The trimmed graph $T_4(T_6(G))$ of $T_6(G)$.}\label{Graph_6_1_trim_2_2}}
\end{figure}

\begin{figure}[h]
\centering{\includegraphics[width=9cm]{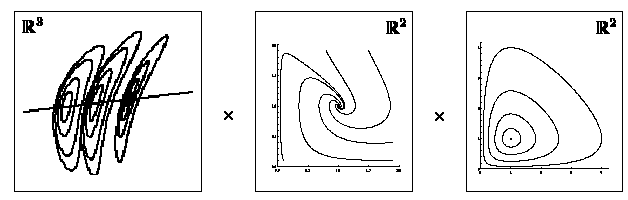}}
\caption{\footnotesize{Representation of the system's dynamics associated to the graph $T_4(T_6(G))$.}\label{Trim_2_2_dynamic}}
\end{figure}

Here, the left-hand side three dimensional system is conservative, leaving invariant a foliation of dimension two transversal to a straight line of equilibria. The middle and right-hand side systems are typical predator-prey, respectively, dissipative and conservative.\\

Trimming $T_4(T_6(G))$ choosing the $\whV$-endpoint $1$ we get the graph $T_1(T_4(T_6(G)))$ represented in figure \ref{Graph_6_1_trim_3}, whose dynamics is a product of three predator-prey systems, illustrated in figure~\ref{Trim_3_dynamic}, with a one dimensional system consisting of equilibria.\\

Notice that by trimming $T_3(T_6(G))$ we obtain an isomorphic graph to the one in figure \ref{Graph_6_1_trim_3}.

\begin{figure}[h]
\centering{\includegraphics[width=6cm]{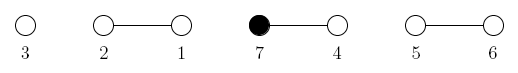}}
\caption{\footnotesize{The trimmed graph $T_1(T_4(T_6(G)))$ of $T_4(T_6(G))$.}\label{Graph_6_1_trim_3}}
\end{figure}

\begin{figure}[h]
\centering{\includegraphics[width=9cm]{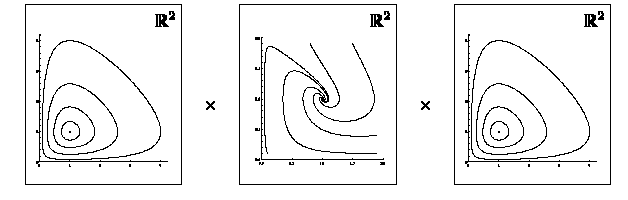}}
\caption{\footnotesize{Representation of the system's dynamics associated to the graph $T_1(T_4(T_6(G)))$.}\label{Trim_3_dynamic}}
\end{figure}

\bigskip
\bigskip
\bigskip

\section*{Acknowledgments}

The first author was partially supported by Fundação para a Ciência e
Tecnologia through the project "Randomness in Deterministic Dynamical
Systems and Applications" (PTDC/MAT/105448/2008).

\bigskip
\bigskip
\bigskip

\bibliographystyle{plain}

\bigskip
\bigskip

\end{document}